\numberwithin{equation}{section}
\theoremstyle{plain} \newtheorem{theorem}{Theorem}[section]
\theoremstyle{plain} \newtheorem{lemma}[theorem]{Lemma}
\theoremstyle{plain} \newtheorem{claim}[theorem]{Claim}
\theoremstyle{plain} \newtheorem{cor}[theorem]{Corollary}
\theoremstyle{plain} \newtheorem{prop}[theorem]{Proposition}
\theoremstyle{plain} 
\theoremstyle{plain} \newtheorem{fact}[theorem]{Fact}
\theoremstyle{definition} \newtheorem{remark}[theorem]{Remark}
\theoremstyle{definition}
\theoremstyle{definition}
\DeclareMathOperator{\codim}{codim}
\DeclareMathOperator{\Span}{span}
\DeclareMathOperator{\dist}{dist}
\DeclareMathOperator{\diam}{diam}
\DeclareMathOperator{\proj}{proj}
\DeclareMathOperator{\conv}{conv}
\newcommand{\rr}{\mathbb{R}}
\newcommand{\R}{\mathbb{R}}
\newcommand{\Z}{\mathbb{Z}}
\newcommand{\Rn}{\mathbb{R}^n}
\newcommand{\N}{\mathbb{N}}
\newcommand{\eps}{\varepsilon}
\newcommand{\de}{\delta}
\newcommand{\su}{\subset}
\newcommand{\sm}{\setminus}
\newcommand{\cK}{{\mathcal{K}}}
\newcommand{\cG}{{\mathcal{G}}}
\newcommand{\J}{J}
\newcommand{\dists}{d}
\newcommand{\cA}{\mathcal{A}}
\begin{document}

\title[Small unions of affine subspaces and skeletons via Baire category]{Small unions of affine subspaces and skeletons via Baire category}

\author{Alan Chang, Marianna Cs\"ornyei, Korn\'elia H\'era and Tam\'as Keleti}

\date{}

\keywords{Hausdorff dimension, Baire category}

\subjclass[2010]{28A78, 54E52}

\address
{Department of Mathematics, The University of Chicago,
5734 S. University Avenue, Chicago, IL 60637, USA}

\email{ac@math.uchicago.edu}

\email{csornyei@math.uchicago.edu}

\address
{Institute of Mathematics, E\"otv\"os Lor\'and University, 
P\'az\-m\'any P\'e\-ter s\'et\'any 1/c, H-1117 Budapest, Hungary}

\email{herakornelia@gmail.com}

\email{tamas.keleti@gmail.com}

\thanks{The third and fourth authors were supported  
by the Hungarian National Research, Development and Innovation Office –- NKFIH 104178, 
and the third author was also supported by the \'UNKP-16-3 New National Excellence Program of the Ministry of Human Capacities.} 

\begin{abstract}
Our aim is to find the minimal Hausdorff dimension of the union of scaled and/or rotated copies of the $k$-skeleton of a fixed polytope centered at the points of a given set. For many of these problems, we show that a typical arrangement in the sense of Baire category gives minimal Hausdorff dimension. 
In particular, this proves a conjecture of R. Thornton.

Our results also show that Nikodym sets are typical among all sets which contain, for every $x\in\R^n$,  
a punctured hyperplane  $H\setminus \{x\}$ through $x$.
With similar methods we also construct a Borel subset of $\Rn$ 
of Lebesgue measure zero
containing a hyperplane at every positive distance from every point.
\end{abstract}

\maketitle

\section{Introduction}

E. Stein \cite{St} proved in 1976 that for any $n\ge 3$, 
if a set $A\su\Rn$ contains a sphere centered at each point of
a set $C\su\Rn$ of positive Lebesgue measure, then $A$ 
also has positive Lebesgue measure. 
It was shown by Mitsis \cite{Mi} that the same holds if
we only assume that 
$C$ is a Borel subset of $\R^n$ of Hausdorff dimension greater than 1. 
The analogous results are also true in the case $n=2$; this was proved 
independently by Bourgain \cite{Bo} and Marstrand \cite{Mar} for 
circles centered at the points of an 
arbitrary set 
$C\su\R^2$
of positive Lebesgue measure, 
and by Wolff \cite{Wo} for  $C\subset\R^2$ 
of Hausdorff dimension greater than $1$.
In fact, Bourgain proved a stronger result, which extends 
to other curves with non-zero curvature.

Inspired by these results, the authors in \cite{KNS} studied what happens if the circles are replaced by axis-parallel squares. They 
constructed a closed set $A$ of Hausdorff dimension $1$ that contains the boundary of an axis-parallel square centered at each point in $\R^2$
(see \cite[Theorem 1.1]{KNS}).
Thornton studied in \cite{Th} the higher dimensional versions: the problem when $0\le k < n$ and $A\su\Rn$ contains
the $k$-skeleton of an $n$-dimensional axis-parallel 
cube centered at every point of a compact set of given 
dimension $d$ for some fixed $d\in[0,n]$. (Recall that the \emph{$k$-skeleton} of a polytope is the union of its $k$-dimensional faces.) He found the smallest possible dimension of 
such a compact $A$ in the cases when 
we consider box dimension and packing dimension. He conjectured that
the smallest possible Hausdorff dimension of $A$ is $\max(d-1,k)$, 
which would be the generalization of \cite[Theorem 1.4]{KNS}, 
which addresses the case $n = 2, k = 0$.

In this paper we prove Thornton's conjecture
not only for cubes but for general polytopes of $\Rn$. It turns out  that it plays an important role whether 0 is contained in one of the $k$-dimensional affine subspaces defined by the $k$-skeleton of the polytope (see Theorem~\ref{skeleton}). This is even more true if instead of just scaling, we also allow rotations. 
In this case,
we ask what the minimal Hausdorff dimension of a set is that contains a scaled and rotated copy of the $k$-skeleton of a given polytope centered at each point of $C$. Obviously, it must have dimension at least $k$ if $C$ is nonempty. 
It turns out that this is sharp: we show that there is a Borel set of dimension $k$ that contains a scaled and rotated copy of the $k$-skeleton of a polytope centered at each point of $\R^n$, \emph{provided that 0 is not in any of the  $k$-dimensional affine subspaces defined by the $k$-skeleton}. On the other hand, if 0 belongs to one of these affine subspaces, then the problem becomes much harder (see Remark~\ref{Kakeya}).

\medskip
As mentioned above at the end of the second paragraph, a (very) special case of Theorem~\ref{skeleton}, namely, when $n=2$ and $S$ consists of the 4 vertices of a square centered at the origin, was already proved in \cite{KNS}. 
Our proof of Theorem~\ref{skeleton} is much simpler than
the proof in \cite{KNS}. In fact, in all our results mentioned above, we will show that, in the sense of Baire category, the minimal dimension is attained by residually many sets. As it often happens, it is much easier to show that some properties hold for residually many sets than to try to construct a set for which they hold. 
In our case, after proving residuality for $k$-dimensional affine subspaces, we automatically obtain residuality for countable unions of $k$-dimensional subsets of $k$-dimensional affine subspaces, hence $k$-skeletons.
If we allow rotations but do not allow scaling, the question becomes: what is the minimal Hausdorff
dimension of a set that contains a rotated copy of the $k$-skeleton of a given polytope centered at each point of $C$? We do not know the answer to this question for a general compact set $C$. However, as the following simple example shows, it is no longer true that a typical construction has minimal dimension.

Let $C \su \rr^2$ denote the unit circle centered at $0$, and let the ``polytope'' be a single point of $C$. Then $\{0\}$ is a set of dimension 0 that contains, centered at each point of $C$, a rotated copy of our ``polytope''. (That is, it contains a point at distance 1 from each point of $C$.) On the other hand, it is easy to show that, if $A$  contains a \emph{nonzero} point at distance 1 from each point of $C$, then $A$ has dimension at least 1. In particular, a ``typical'' $A$ has dimension 1 and not 0. The same example also shows that the minimal dimension can be different depending on whether the ``polytope'' consists of one point or two points.

However, we will show that a typical construction does have minimal dimension, provided that $C$ has full dimension, i.e., $\dim C=n$ for $C\subset\R^n$. In this case, the minimal (as well as typical) dimension of a set $A$ that contains a rotated copy of the $k$-skeleton of a polytope centered at each point of $C$ is $k+1$.
Somewhat surprisingly, we obtain that the smallest possible dimension (and also the typical dimension) is still $k+1$ if we want the
$k$-skeleton of a rotated copy of the polytope of \emph{every size} centered at every point. 

\medskip

Let us state our results more precisely.
Throughout this paper, by a \emph{scaled copy} of a fixed set $S\su\Rn$ we mean a 
set of the form $x+rS=\{x+rs\ :\ s\in S \}$, where $x\in\Rn$ and $r>0$. We say that $x+rS$ is a scaled copy of $S$ \emph{centered at $x$}.  
(That is, the center of $S$ is assumed to be the origin.) Similarly, a 
\emph{rotated copy} of $S$ centered at $x \in \R^n$ is $x+T(S) =\{x+T(s)\ :\ s\in S \}$, where $T\in SO(n)$. Combining these two, we define a
\emph{scaled and rotated copy} of $S$ centered at $ x\in\Rn$ by $x+rT(S)=\{x+rT(s)\ :\ s\in S \}$, where $r>0$ and $T\in SO(n)$. 

In this paper we will consider only Hausdorff dimension, and we will denote by $\dim E$ the Hausdorff dimension of a set $E$. We list here the special cases of our results when the polytope is a cube and the set of centers is $\Rn$. (The first statement was already proved in \cite{Th}.)

\begin{cor}\label{c:dim}
For any integers $0\le k<n$, the minimal dimension of a Borel set $A\su\R^n$ 
that contains the $k$-skeleton of
\begin{enumerate} 
\item a scaled copy of a cube centered at every point of $\Rn$ is $n-1$;
\item a scaled and rotated copy of a cube centered at every point of $\Rn$ is $k$;
\item a rotated copy of a cube centered at every point of $\Rn$ is $k+1$;
\item a rotated cube of every size centered at every point of $\Rn$ is $k+1$.
\end{enumerate}
In fact, the same results hold if the $k$-skeleton of a cube is replaced 
by any $S\su\R^n$ with $\dim S=k$ that can be covered by a countable union
of $k$-dimensional affine subspaces that do not contain $0$.
\end{cor}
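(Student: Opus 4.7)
The plan is to obtain each of the four parts by specializing one of the master theorems promised in the introduction to the geometry of a cube, or, more generally, to the geometry of any $S\su\Rn$ with $\dim S=k$ that is contained in a countable union of $k$-dimensional affine subspaces none of which passes through the origin. The first step is therefore to verify this hypothesis for the $k$-skeleton of a cube. Taking the cube to be $[-1,1]^n$, every $k$-face is obtained by fixing $n-k\ge 1$ of the coordinates to $\pm 1$, so the affine hull of such a face is $\{x\in\Rn : x_{i_1}=\pm 1,\ldots,x_{i_{n-k}}=\pm 1\}$; since at least one coordinate is pinned to $\pm 1$, this affine subspace avoids the origin. There are only finitely many such subspaces, so the $k$-skeleton is of the claimed form, and it suffices to establish the four dimension identities under the hypothesis in the second sentence of the corollary.

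For parts (1) and (2), I invoke the general theorems for scaled copies and for scaled-and-rotated copies, respectively. Part (1) is the $C=\Rn$ case of Theorem~\ref{skeleton}, the generalization of Thornton's conjecture promised in the introduction: the lower bound is $\max(\dim C-1,k)=\max(n-1,k)=n-1$, and the matching upper bound is supplied by the Baire-category construction announced in the introduction, which shows that residually many arrangements attain this dimension. Part (2) uses the scaled-and-rotated analogue. The upper bound of $k$ is precisely what the introduction states for configurations whose defining $k$-dimensional affine subspaces avoid the origin, and the lower bound is trivial since $A$ must contain at least one scaled and rotated copy of $S$, which has dimension $k$.

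For parts (3) and (4), I appeal to the rotated-only and rotated-every-size theorems in the case $C=\Rn$. Both assert that the minimum dimension is $k+1$. The upper bound is again provided by the residual construction, so the real content is the lower bound: one must rule out dimension $k$. As the circle example after the introduction indicates, this is a full-dimension phenomenon. Since a rotation preserves distance from its center, a set $A$ containing a rotated copy of $S$ centered at every point of $\Rn$ must, along a suitable foliation of $\Rn$, hit many parallel level sets; combining this rigidity with $\dim C=n$ via a Fubini-type slicing yields the extra unit of dimension. The main obstacle in the proof of those theorems is therefore the lower bound; granted it, the corollary follows as a direct consequence, and the extension to an arbitrary $S$ satisfying the hypothesis requires no additional argument since all four master theorems are stated at that level of generality.
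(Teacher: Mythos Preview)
Your approach matches the paper's: after checking that the $k$-skeleton of $[-1,1]^n$ lies in finitely many $k$-planes avoiding the origin, each part is read off from the corresponding general result (Theorem~\ref{thm} for (1), the theorem following Lemma~\ref{l:rotated} for (2), and Theorem~\ref{t:generalkplus1} together with Fact~\ref{t:atleastkplus1} for (3)--(4)), with a routine countable-union step to pass from compact $C$ or $\J$ to all of $\Rn$. One small correction: the lower bound $k+1$ in (3) and (4) is not obtained by the foliation/Fubini heuristic you sketch but is simply quoted from \cite{HKM} as Fact~\ref{t:atleastkplus1}.
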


\medskip
For $k = n-1$ it is natural to ask if, in addition to dimension $k + 1 = n$, we can also guarantee positive Lebesgue measure in the settings (3) and (4). As we will see, we cannot guarantee positive measure. We show that there are residually many \emph{Nikodym sets}, i.e., sets of measure zero which contain a punctured hyperplane through every point. %
The existence of Nikodym sets in $\R^n$ for every $n\ge 2$ was proved by Falconer \cite{Fa86}.
We also obtain residually many sets of measure zero which contain a hyperplane at every positive distance from every point.
By combining our these two results, we get the following.
\begin{cor}\label{c:zeromeasure}
Let $S\su\R^n$ $(n\ge 2)$ be a set that can be covered by countably many
hyperplanes and suppose that $0\not\in S$. Then there exists a set of
Lebesgue measure zero that contains a scaled and rotated copy of $S$
of every scale centered at every point of $\R^n$.
\end{cor}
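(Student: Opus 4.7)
My plan is to combine the two prior residuality results via a double application of Baire category, first in a common space of measure-zero sets and then in $SO(n)$.

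Since $S$ is covered by countably many hyperplanes and $0\notin S$, write $S \su \bigcup_{i\ge 1} H_i$ and partition the indices as $\mathcal{I}_0=\{i : 0\in H_i\}$ and $\mathcal{I}_+=\{i : 0\notin H_i\}$. For $r>0$ and $T\in SO(n)$, the image $x+rT(H_i)$ is a hyperplane through $x$ (so $x+rT(H_i\cap S)$ lies in a punctured hyperplane through $x$, since $0\notin S$) when $i\in \mathcal{I}_0$, and is a hyperplane at positive distance $r\dist(0,H_i)$ from $x$ when $i\in \mathcal{I}_+$. So if for each $(x,r)$ I can find a single $T\in SO(n)$ with $x+rT(H_i)\su A$ for every $i$ (in the punctured sense when $i\in\mathcal{I}_0$), then $x+rT(S)\su A$ as required.

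I would choose $A$ in the intersection of the residual classes given by the two prior results; both are residual (and both force measure zero) in a common Baire space of closed sets, so their intersection is residual and in particular non-empty. The key technical step is to promote these residuality statements to the following stronger form: for residually many measure-zero $A$, for every $x\in \R^n$ the set of orientations $\ell\in G(n-1,n)$ for which the punctured hyperplane through $x$ with direction $\ell$ lies in $A$ is a dense $G_\delta$ subset of the Grassmannian $G(n-1,n)$; and similarly, for every $x\in\R^n$ and every $d>0$, the set of orientations of hyperplanes at distance $d$ from $x$ lying in $A$ is a dense $G_\delta$ subset of $G(n-1,n)$. This strengthening should fall out of the underlying Baire category proofs, since the condition ``some orientation in a given basic open subset of the Grassmannian works'' is itself open-dense, and a countable intersection over a countable basis remains dense $G_\delta$.

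Given such an $A$ and any $(x,r)$, for each $i$ let $G_i=\{T\in SO(n) : x+rT(H_i\cap S)\su A\}$. This condition depends only on the orientation of $T(H_i)$, and the map $T\mapsto \text{direction}(T(H_i))$ is a continuous open surjection from $SO(n)$ onto $G(n-1,n)$; hence $G_i$ is the preimage of the relevant dense $G_\delta$ subset of the Grassmannian and is itself dense $G_\delta$ in $SO(n)$. The Baire category theorem in the Polish space $SO(n)$ then gives $\bigcap_{i} G_i\neq\emptyset$, and any $T$ in this intersection yields $x+rT(S)\su A$. The main obstacle I anticipate is the strengthening step above -- verifying that the two prior residuality proofs genuinely deliver a \emph{dense $G_\delta$} of good orientations per $(x,d)$ rather than merely a single good orientation -- but once this is in hand, the rest is a routine second application of Baire category.
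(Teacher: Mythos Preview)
Your plan has a real gap, and it stems from a misreading of where the paper's residuality lives. The two prior results are \emph{not} residuality statements in some Baire space of closed subsets $A\subset\R^n$; they are residuality statements in the code space $\cK''$ of compact $K\subset \J\times SO(n)$ with full projection onto $\J$. A typical $K$ selects, for each $(x,r)\in\J$, at least one $T$ with $(x,r,T)\in K$---usually very few. The associated set $A_K$ therefore contains, for each $(x,r)$, hyperplanes in only a meager set of directions, not a dense $G_\delta$ set. So the ``strengthening step'' you flag as the main obstacle is not just unproved; as stated it is false for the objects the paper actually produces. Building a measure-zero $A$ that contains, for every $x$ and every $d\ge 0$, a dense $G_\delta$ family of hyperplane orientations would be a genuinely new (and stronger) theorem, and nothing in the paper's proofs hands it to you.

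The paper's route avoids this entirely and is much simpler. One fixes a compact $\J\subset\R^n\times(0,\infty)$ and works in the single code space $\cK''(\J)$. For each covering hyperplane $H_i$ there is an affine change of variables $(x,r,T)\mapsto (x,r d_i,TU_i)$ (with $d_i=\dist(0,H_i)$ and $H_i=d_iU_i(e_1)+U_i(H)$) under which the set $\bigcup_{(x,r,T)\in K} x+rT(H_i)$ becomes exactly the set in Theorem~\ref{theorem:typical-davies-nikodym}; when $d_i=0$ the puncture at $x$ is what survives, and since $0\notin S$ that suffices. Hence for each $i$ the set of $K\in\cK''(\J)$ with $\bigl|\bigcup_{(x,r,T)\in K}x+rT(S\cap H_i)\bigr|=0$ is residual. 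Taking the countable intersection over $i$ yields a single $K$ for which $A_K=\bigcup_{(x,r,T)\in K}x+rT(S)$ has measure zero, and the full-projection condition gives, for every $(x,r)\in\J$, one $T$ that works for \emph{all} $i$ simultaneously---no second Baire argument in $SO(n)$ is needed. A countable union over $\J$'s exhausting $\R^n\times(0,\infty)$ finishes the proof.
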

Note that here we need only the assumption $0\not\in S$ 
(which clearly cannot be dropped), while in Corollary~\ref{c:dim} we needed
the stronger assumption that the covering affine subspaces do not contain $0$. Also, Corollary~\ref{c:zeromeasure} is clearly false for $n = 1$.

\medskip
One can ask what happens for those sets $S$ to which neither the classical results nor our results can be applied. %
One of the simplest such case is when, say, $n=1$ and $S=C-1/2$, where $C$ is the classical triadic Cantor set in the interval $[0,1]$. We do not know how large a set $A$ can be that contains a scaled
copy of $S$ centered at each $x\in\R$. Does it always have positive Lebesgue measure, or Hausdorff dimension at least $1$?
In \cite{LP} \L{}aba and Pramanik construct random Cantor sets for which such a set must have positive Lebesgue measure, and by the result of M\'ath\'e \cite{Mt}, there exist Cantor sets for which such a set $A$ can have zero measure. Hochman \cite{Ho} and Bourgain \cite{Bo2} prove that for any porous Cantor set $C$ with $\dim C > 0$, such a set $A$ must have Hausdorff dimension strictly larger than $\dim C$ and at least $1/2$.

\medskip

Finally we remark that T. W. K\"orner \cite{Ko} observed in 2003 that small Kakeya-type sets can be constructed using Baire category argument.
He proved that if we consider the Hausdorff metric on the space of all 
compact sets that contain line segments
in every possible direction between two fixed parallel line segments, 
then in this space, residually many sets have zero Lebesgue measure. 
As we will see, in our results we obtain
residually many sets in a different type of metric space: 
we consider Hausdorff metric in a ``code space''.

\section{Scaled copies}%

In this section we consider only scaled (not rotated) copies of $S$. We will prove the following theorem:
 
\begin{theorem}\label{skeleton} 
Let $S$ be the $k$-skeleton of an arbitrary polytope in $\R^n$ 
for some $0\le k<n$, and let $d\in[0,n]$ be arbitrary.
\begin{itemize}
\item[(i)] Suppose that 
$0$ is not contained in any of the $k$-dimensional affine subspaces 
defined by $S$.
Then the smallest possible dimension of a compact set $A$ that contains 
a scaled copy of $S$ 
centered at each point of some $d$-dimensional compact set $C$ is $\max(d-1,k)$.
\item[(ii)] Suppose that 
$0$ is contained in at least one of the $k$-dimensional affine subspaces 
defined by $S$.
Then the smallest possible dimension 
of a compact set $A$ that contains 
a scaled copy of $S$ 
centered at each point of some $d$-dimensional compact set $C$ is $\max(d,k)$.
\end{itemize}
\end{theorem}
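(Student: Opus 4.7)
The theorem splits into matching lower and upper bounds in each case.

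\textbf{Lower bounds.} The estimate $\dim A \ge k$ is immediate in both (i) and (ii), since $A$ contains a scaled copy of $S$, of dimension $k$. In case (ii), if $0 \in V$ for some $k$-dimensional affine subspace $V$ defining $S$, then $c \in c + r_c V \subseteq A$ for every $c$, hence $A \supseteq C$ and $\dim A \ge d$. For the remaining bound $\dim A \ge d-1$ in case (i), I would fix $V = v_0 + W$ defining $S$ with $0 \notin V$ (so $v_0 \notin W$), let $\pi : \R^n \to W^\perp$ be the orthogonal projection, and set $u := \pi(v_0) \ne 0$. Each $k$-face $c + r_c F \subseteq A$ lies in the $k$-plane $c + r_c V$, parallel to $W$ and positioned at $\pi(c) + r_c u \in W^\perp$, with positive $\mathcal{H}^k$-measure. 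A Fubini-type slicing inequality then gives
\[ \dim A \;\ge\; k + \dim B, \qquad B := \{\pi(c) + r_c u : c \in C\}. \]
Composing $\pi$ with the further orthogonal projection to $(\R u \oplus W)^\perp$ (of kernel dimension $k+1$), and applying the Fubini/Eilenberg slicing inequality $\dim E \le \dim \pi'(E) + \dim \ker \pi'$ for Borel $E$, yields $\dim B \ge d - k - 1$; hence $\dim A \ge d - 1$.

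\textbf{Upper bounds via Baire category.} The matching constructions follow the Baire category strategy announced in the introduction. Since $S$ is a finite union of $k$-faces, each contained in a $k$-dimensional affine subspace $V$, and countable intersections of residual sets are residual, it suffices to prove the upper bound for a single $V$ at a time and then intersect. For one $V$, I would introduce a complete metric ``code space'' $\mathcal{X}$ whose elements $\omega$ encode admissible configurations via compact sets in an ambient parameter space: for instance, compact subsets $K_\omega \subseteq C \times [\delta, \Delta]$ that project onto $C$, equipped with the Hausdorff metric, and assign $A_\omega := \bigcup_{(c,r) \in K_\omega} (c + r V)$ intersected with a large bounding box. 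Setting $s := \max(d-1, k)$ in (i) or $\max(d, k)$ in (ii), I would prove that for each $\epsilon, \eta > 0$ the set $U_{\epsilon,\eta} \subseteq \mathcal{X}$ of $\omega$ whose $A_\omega$ admits a cover $\{B_i\}$ with $\sum (\diam B_i)^{s+\epsilon} < \eta$ is open (from Hausdorff continuity of $\omega \mapsto A_\omega$) and dense. Baire's theorem then supplies $\omega \in \bigcap_m U_{1/m, 1/m}$ with $\dim A_\omega \le s$.

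\textbf{Density step and main obstacle.} Density of $U_{\epsilon,\eta}$ is the heart of the argument. Given $\omega$, I would perturb to a nearby $\omega'$ whose selection of $(c, r)$ pairs is ``concentrated''. In case (i), since $u \ne 0$, for each $c$ one can slide $r$ along a one-parameter family to push the $W^\perp$-position $\pi(c) + r u$ into a prescribed target set $T \subseteq W^\perp$ of dimension at most $\max(d-1-k, 0)$; the resulting $\bigcup_c (c + r'(c) V)$ then lies in $\pi^{-1}(T)$, has dimension at most $s$, and admits an $s$-dimensional cover of cost below $\eta$. In case (ii), a through-$0$ subspace $V_*$ satisfies $c + r V_* = c + W_*$ independently of $r$, so $A_\omega \supseteq C + W_*$; other subspaces are handled as in (i), and the total fits within dimension $\max(d, k)$. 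The principal technical obstacles are: (a) designing the code space and its metric so that these shrinkage perturbations are genuinely small moves in $\mathcal{X}$; (b) making the perturbation of $r$ quantitative and uniform in $c$, likely via a countable partition of $C$ into pieces of controlled projection behaviour, then combining the resulting local covers into one global cover of $A_\omega$ with $s$-dimensional cost below $\eta$; and (c) justifying the Fubini-type slicing and projection inequalities used in the lower bound for arbitrary Borel $C$.
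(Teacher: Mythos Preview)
Your proposal captures the right overall architecture---Fubini/projection for the lower bounds, Baire category in a code space of compact selections for the upper bounds---and for case~(i) the lower-bound argument is essentially the paper's. However, there are two genuine gaps.

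\textbf{Lower bound in (ii).} The claim ``$c \in c + r_c V \subseteq A$'' is incorrect: $A$ is only assumed to contain $c + r_c S$, and each $S_i$ is a \emph{face} of the polytope, a bounded proper subset of its affine span $V_i$. Even when $0 \in V_i$, the origin need not lie in $S_i$, so one cannot conclude $C \subseteq A$. The correct argument is the same projection/Fubini step you use in~(i): project onto $\Span\{V_i\}^\perp$ and note that every slice of $A$ over a point of $C'_i$ contains a scaled copy of $S_i$. When $0 \in V_i$ one has $\Span\{V_i\}=V_i$, so the projection has kernel of dimension $k$ (not $k+1$), and $\dim C \le \dim C'_i + k$ yields $\dim A \ge k + \dim C'_i \ge d$.

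\textbf{Upper bound: the choice of $C$.} This is the more serious omission. The theorem asks for the minimum over all compact $C$ with $\dim C = d$, so the upper bound requires \emph{constructing} a suitable $C$; your proposal never does this. Carried out correctly for a \emph{fixed} $C$ and a single $V$, the Baire argument yields only $\dim A_{K,V} = k + \dim C'$ for typical $K$, where $C'$ is the projection of $C$ onto $\Span\{V\}^\perp$ (this is the paper's Lemma~2.1). For a generic $C$ this is strictly larger than $\max(d-1,k)$: e.g.\ if $C$ lies inside $\Span\{V\}^\perp$ then $\dim C' = d$ and the typical $A$ has dimension $d+k$. Your density step asserts the existence of a target $T\subseteq W^\perp$ of dimension at most $\max(d-1-k,0)$ meeting every line $\pi(c)+\R u$; for a general $C$ no such $T$ exists, since that family of lines is parametrised by a further projection of $C$ whose dimension you do not control. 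The paper handles this with a separate, nontrivial ingredient (its Claim~2.3): using Falconer's classes $\mathcal G^s_n$ of sets with large-intersection property, it builds a single compact $C$ of dimension exactly $d$ whose projection onto \emph{each} $\Span\{V_i\}^\perp$ simultaneously has the minimal dimension $\max(0,\,d-\codim W_i)$. Only after fixing such a $C$ does the Baire category argument produce $A$ of the claimed dimension.
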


Thornton's conjecture mentioned in the introduction is clearly a special case of part (i) of this theorem.

\medskip
In fact, our main goal is to study a slightly different problem, from 
which we can deduce the results above.
Our aim is to find for a given ``skeleton'' $S$ 
and for a given 
nonempty compact set of centers $C$ (instead of a given $S$ and a given $\dim C$) the smallest possible value of $\dim A$, where $A$
contains a scaled copy of $S$ centered at each point of $C$.

We will study the case when $S$ is the $k$-skeleton of a polytope, or more generally, the case when $S$ is a countable union $S=\bigcup S_i$, where each $S_i$ is contained in an affine subspace $V_i$.
We will assume that $C$ is compact and nonempty. Our aim is to show that, in the sense of Baire category, a typical set $A$ that contains a scaled copy of $S$ centered at each point of $C$ has minimal dimension. 

Let us make this more precise. 
Fix a nonempty compact set $C\su\Rn$ and a non-degenerate closed interval $I\subset (0,\infty)$.
In what follows, we view $C \times I$ as a parametrization of the space of certain scaled copies of a given set $S \subset \R^n$; in particular, $(x, r) \in C \times I$ corresponds to the copy centered at $x$ and scaled by $r$.
Let $\cK$ denote the space of all compact sets $K\subset C\times I$ that have full projection onto $C$. (That is, for each $x\in C$ there is an $r\in I$ with $(x,r)\in K$.) We equip $\cK$ with the Hausdorff metric. Clearly, $\cK$ is a closed subset of the space of all compact subsets of $C\times I$, and hence it is a complete metric space. In particular, the Baire category theorem holds for $\cK$, so we can speak about a typical $K\in\cK$ in the Baire category sense: a property $P$ holds for a typical $K\in\cK$ 
if $\{K\in\cK : P \textrm{ holds for } K\}$ is residual in $\cK$,
or equivalently,
if there exists a dense $G_\delta$ 
set $\cG\su\cK$ such that the property holds for every $K\in\cG$.

Let $A$ be an arbitrary set that contains a scaled copy of $S\su\Rn$ 
centered at each point of $C$.
First we show an easy lower estimate on $\dim A$, which in some important cases will 
turn out to be sharp.
Let $C'$ denote the orthogonal projection of $C$ onto $W:=\Span\{S\}^\perp$.
(As usual, we denote by $\Span\{S\}$ the linear span of $S$, 
so it always contains the origin.)
For every point $x'\in C'$ there exists an $x\in C$
such that the projection of $x$ onto $W$ is $x'$,
and there exists an $r>0$
such that $x+rS\su A$ and hence
$x+rS\su (x'+\Span\{S\}) \cap A$.
Since for any $x'\in C'\su W = \Span\{S\}^\perp$ 
the set $(x'+\Span\{S\}) \cap A$ contains a 
scaled copy of $S$, we obtain by the general Fubini type inequality
(see e.g. in \cite{Fa85} or \cite{Fa90})
\begin{equation}
\label{triviineq}
\dim A \ge \dim C' + \dim S.
\end{equation}

Now let $K\in\cK$ and $S\su\Rn$ and consider
\begin{equation}\label{defAKS}
A=A_{K,S}:=\bigcup_{(x,r)\in K}x+rS. 
\end{equation}
Note that $A_{K,S}$ contains a scaled copy of $S$ centered at each point of $C$,
so by the previous paragraph,
\begin{equation}
\label{triviAKS}
\dim A_{K,S} \ge \dim C' + \dim S.
\end{equation}

The following lemma shows that for a typical $K\in\cK$ we have equality in
\eqref{triviAKS} if $S$ is an affine subspace.

\begin{lemma}\label{main}
Let $V$ be an affine subspace of $\Rn$, let $\emptyset\neq C\su\R^n$ be compact,
and let $C'$ denote the projection of $C$ onto $\Span\{V\}^\perp$.
Then for a typical $K\in\cK$, and for $A_{K,V}$ defined by \eqref{defAKS},
$$\dim A_{K,V}=\dim C'+\dim V.$$
\end{lemma}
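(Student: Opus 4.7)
The lower bound $\dim A_{K,V}\ge\dim C'+\dim V$ holds for every $K\in\cK$ by \eqref{triviAKS}, so the task is to establish the reverse inequality for residually many $K$. My plan is to express this as a Baire-category assertion about Hausdorff content. For each $R,m\in\N$ and each rational $s>\dim C'+\dim V$, let $V_R:=V\cap\ol{B}(0,R)$ and set
\[ U_{R,s,m}:=\{K\in\cK : \Hc^{s}(A_{K,V_R})<1/m\}; \]
since $\dim A_{K,V}=\sup_R\dim A_{K,V_R}$, the countable intersection of the $U_{R,s,m}$ is a $G_\delta$ subset of $\cK$ on which $\dim A_{K,V}\le\dim C'+\dim V$. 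I would show each $U_{R,s,m}$ is open and dense, so the intersection is a dense $G_\delta$.

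Openness is routine: $V_R$ is compact, so $K\mapsto A_{K,V_R}$ is Lipschitz (with constant $O(R)$) from $\cK$ into the compact subsets of $\R^n$ in the Hausdorff metric, and $\Hc^s$ is upper semicontinuous in this metric. Hence the composition is upper semicontinuous and its sub-level sets are open.

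Density is the crux. The structural idea is that every affine subspace $x+rV$ is parallel to the direction space $W_0:=V-V$, so under the orthogonal decomposition $\R^n=W_0^\perp\oplus W_0$,
\[ A_{K,V}=P_K+W_0, \qquad P_K:=\{\pi_{W_0^\perp}(x)+r\,u_0 : (x,r)\in K\}\subset W_0^\perp, \]
where $u_0:=\pi_{W_0^\perp}(v_0)$ for any $v_0\in V$ (the value of $u_0$ is independent of the choice). The decomposition is a product, so $\dim A_{K,V}=\dim P_K+\dim V$, and it suffices to build, near a given $K_0\in\cK$, some $K\in\cK$ with $\dim P_K\le\dim C'$. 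If $0\in V$ take $v_0=0$, so $u_0=0$ and $P_K=\pi_{W_0^\perp}(C)=C'$ for every $K\in\cK$; the bound is then automatic. If $0\notin V$, then $u_0\neq 0$; writing $u=u_0/\alpha$ with $\alpha=|u_0|>0$, we have $W_0^\perp=W\oplus\R u$ and
\[ \pi_{W_0^\perp}(x)+r\alpha u=\pi_W(x)+(x\cdot u+r\alpha)\,u. \]
Given $K_0$ and $\delta>0$, cover $K_0$ by finitely many closed balls $\ol{B}((x_i,r_i),\epsilon)$, $i=1,\dots,N$, with $(x_i,r_i)\in K_0$ and $\epsilon$ small, let $C_i$ be the projection of $K_0\cap\ol{B}((x_i,r_i),\epsilon)$ onto $C$ (so $\bigcup_iC_i=C$), and define the affine section
\[ r_i(x):=r_i+t_i+\alpha^{-1}(x_i-x)\cdot u, \qquad x\in C_i, \]
where the shift $t_i=O(\epsilon)$ is chosen so that $r_i(x)\in I$ throughout $C_i$ (possible for small $\epsilon$ because $I$ has nonempty interior). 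This choice forces $x\cdot u+r_i(x)\alpha=x_i\cdot u+(r_i+t_i)\alpha=:c_i$ to be \emph{independent of $x$}, so for
\[ K:=\bigcup_{i=1}^{N}\{(x,r_i(x)):x\in C_i\}\in\cK \]
one obtains $P_K\subset\bigcup_i(\pi_W(C_i)+c_i u)\subset\bigcup_i(C'+c_i u)$, a finite union of translates of subsets of $C'$ inside $W_0^\perp$. Hence $\dim P_K\le\dim C'$ and $\dim A_{K,V_R}\le\dim C'+\dim V<s$, giving $\Hc^s(A_{K,V_R})=0<1/m$, i.e.\ $K\in U_{R,s,m}$. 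A direct estimate using $|\alpha|$ and $|I|$ shows $d_H(K,K_0)\le\delta$ for sufficiently small $\epsilon$.

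The main obstacle is identifying this density construction. A naïve piecewise-constant approximation $K=\bigcup_iC_i\times\{r_i\}$ yields only the bound $\dim A_{K,V}\le\dim C+\dim V$, which can be strictly larger than $\dim C'+\dim V$. The improvement requires the affine choice of $r_i(x)$ above, which exploits the one-dimensional freedom in the $u$-direction to cancel the $u$-component of $x$ and thereby collapse the offsets $\pi_{W_0^\perp}(x)+ru_0$ onto finitely many parallel translates of $C'$.
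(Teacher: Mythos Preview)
Your argument is correct and follows essentially the same route as the paper: both proofs factor $A_{K,V}$ as a product of the direction space $V_0=W_0$ with the set $F(K)=P_K\subset W_0^\perp$, and both achieve density by forcing the scalar quantity $\proj x+r$ (equivalently $x\cdot u+r\alpha$) to take only finitely many values so that $P_K$ is covered by finitely many translates of $C'$. The paper's implementation is a bit leaner---it avoids truncating $V$ by working directly with the continuous map $F\colon\cK\to\cK^n$, and its dense set is the single formula $K=\{(x,r):\exists\,r',\ (x,r')\in L,\ \proj x+r\in\eps\Z,\ |r-r'|\le\eps\}$ rather than a covering-by-balls construction---but the underlying idea is identical.
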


We postpone the proof of this lemma and first study some of its corollaries. 
Suppose that $S$ is a countable union $S=\bigcup S_i$, where each $S_i$ is a
subset of an affine subspace $V_i$. %
Let $C_i'$ denote the orthogonal projection of $C$ onto $W_i:=\Span\{V_i\}^\perp$. Since a countable intersection of 
residual sets is residual, and since the Hausdorff dimension of a countable union of sets is the supremum of the Hausdorff dimension of the individual sets, it follows that for a typical $K\in\cK$, 
$$\dim A_{K,S}=
\dim\left(\bigcup_i A_{K,S_i}\right)\le \sup_i (\dim C_i'+\dim V_i).$$
On the other hand, if $A$ contains a scaled copy of $S=\bigcup_i S_i$ centered at each $x\in C$, then applying \eqref{triviineq} to each $S_i$, we get
$\dim A \ge \dim C'_i+\dim S_i$ for each $i$ and thus 
$\dim A\ge \sup_i (\dim C_i'+\dim S_i)$.
Therefore, we obtain the following theorem:

\begin{theorem}\label{thm} Let $C$ be an arbitrary nonempty compact subset 
in $\R^n$, and let $S=\bigcup_{i=1}^\infty S_i$, where each $S_i$ is a 
subset of an affine subspace $V_i$. %
Let $C_i'$ denote the orthogonal projection of $C$ onto $\Span\{V_i\}^\perp$. Then:
\begin{itemize}
\item[(i)] For every set $A$ that contains a scaled copy of $S$ centered at each point of $C$,
$$\dim A\ge \sup_i (\dim C_i'+\dim S_i).$$
\item[(ii)] For a typical $K\in\cK$, the set 
$A=A_{K,S}$ defined by \eqref{defAKS} 
contains a scaled copy of $S$ centered at each point of $C$ and
$$\dim A \le \sup_i (\dim C_i'+\dim V_i).$$
Furthermore, if $S$ is compact then so is $A$.
\end{itemize}
\end{theorem}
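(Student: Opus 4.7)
The plan is to deduce both parts of the theorem by applying, for each index $i$ separately, the tools already developed in the excerpt---namely \eqref{triviineq} for (i) and Lemma~\ref{main} for (ii)---to the individual pieces $S_i$ and their ambient subspaces $V_i$, and then to aggregate the per-index conclusions via standard countable-union arguments. For part (i), I would fix $i$ and observe that if $A$ contains a scaled copy of $S$ centered at each $x\in C$, then a fortiori $A$ contains a scaled copy of $S_i$ centered at each $x\in C$. Applying \eqref{triviineq} with $S$ replaced by $S_i$ (and correspondingly $C'$ replaced by $C_i'$, the projection onto $\Span\{V_i\}^\perp=\Span\{S_i\}^\perp$) yields $\dim A \ge \dim C_i' + \dim S_i$; taking the supremum over $i$ gives the claimed lower bound.

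For part (ii), I would invoke Lemma~\ref{main} with $V:=V_i$ for each $i$: this furnishes a residual set $\cG_i\subset\cK$ such that every $K\in\cG_i$ satisfies $\dim A_{K,V_i}=\dim C_i'+\dim V_i$. Because a countable intersection of residual sets in the complete metric space $\cK$ is residual, $\cG:=\bigcap_{i\ge 1}\cG_i$ is itself residual in $\cK$. For any $K\in\cG$, the inclusion $S_i\subset V_i$ gives $A_{K,S_i}\subset A_{K,V_i}$, and the countable stability of Hausdorff dimension under unions yields
$$\dim A_{K,S}=\dim\bigcup_i A_{K,S_i}\le \sup_i \dim A_{K,V_i}=\sup_i (\dim C_i'+\dim V_i).$$
The full-projection property built into the definition of $\cK$ guarantees that $A_{K,S}$ contains a scaled copy of $S$ centered at each point of $C$. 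Finally, if $S$ is compact, then $A_{K,S}$ is the continuous image of the compact set $K\times S$ under the map $(x,r,s)\mapsto x+rs$, hence compact.

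The only real obstacle is hidden in Lemma~\ref{main}, whose proof is deferred; the present theorem is essentially a bookkeeping exercise combining a uniform lower bound from the Fubini-type inequality with a Baire-category upgrade from affine subspaces to their (possibly much smaller) countable-union subsets. Note also that the upper bound in (ii) is stated in terms of $\dim V_i$ while the lower bound in (i) uses $\dim S_i$; the gap vanishes precisely when each $S_i$ has full dimension inside $V_i$, which is the situation arising in the skeleton applications of Theorem~\ref{skeleton}.
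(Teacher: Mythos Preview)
Your argument is essentially identical to the paper's: both derive (i) by applying \eqref{triviineq} to each $S_i$ and then taking the supremum, and (ii) by invoking Lemma~\ref{main} for each $V_i$, intersecting the resulting residual sets, and using countable stability of Hausdorff dimension; your remark on compactness of $A_{K,S}$ via the continuous image of $K\times S$ is a nice explicit justification that the paper leaves implicit.

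One small slip: the parenthetical identity $\Span\{V_i\}^\perp=\Span\{S_i\}^\perp$ is not true in general, since $S_i$ may span a strictly smaller linear subspace than $V_i$. This does not damage the conclusion: from $\Span\{S_i\}\subset\Span\{V_i\}$ one gets $\Span\{V_i\}^\perp\subset\Span\{S_i\}^\perp$, so $C_i'$ is a Lipschitz projection of the projection of $C$ onto $\Span\{S_i\}^\perp$, whence $\dim C_i'$ is at most the dimension of the latter, and \eqref{triviineq} applied to $S_i$ still yields $\dim A\ge\dim C_i'+\dim S_i$. (The paper glosses over the same point.)
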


Let $W_i=\Span\{V_i\}^\perp$. Note that if $0\not\in V_i$ then $\dim W_i=n-\dim V_i-1$. 
Therefore if $\dim C=n$, $k<n$, $\dim S=k$, and for every $i$ we
have $0\not\in V_i$ and $\dim V_i=k$, then 
$\sup_i \dim S_i=k$ and $\dim C'_i=n-k-1$ for every $i$, so 
Theorem~\ref{thm} gives $\dim A=n-1$, which proves
the general version of (1) of Corollary~\ref{c:dim}.

\medskip
So far we studied the problem of finding the minimal Hausdorff dimension of a set $A$ that contains a copy of a given set $S$ centered at each point of a given set $C$.
Now we turn to the problem when, instead of $S$ and $C$, we are only given $S$ and $d=\dim C$. 
We suppose that $\dim S_i = \dim V_i$ for each $i$, so the lower and 
upper estimates in (i) and (ii) agree.

Since clearly 
$\dim C_i' \ge \max(0, \dim C - \codim W_i)$, 
where $\codim W_i$ denotes the co-dimension of the linear space $W_i$, therefore
Theorem~\ref{thm}(i) gives
$$
\dim A \ge \sup_i (\max(0, d - \codim W_i) + \dim S_i).
$$
In order to show that this estimate is sharp when $\dim S_i=\dim V_i$, by
Theorem~\ref{thm}(ii), it is enough to find a compact set $C\su\Rn$ for which 
$\dim C_i' = \max(0, \dim C - \codim W_i)$ holds for each $i$.
This can be done by the following claim, which we will prove later.
\begin{claim}\label{falconer}
For each $i\in\N$, let $W_i$ be a linear subspace of $\R^n$ of co-dimension 
$l_i\in\{0,1,\ldots,n\}$.
Then for every $d\in[0,n]$ there exists a $d$-dimensional compact set $C\su\Rn$ whose projection onto $W_i$ has dimension $\max(0,d-l_i)$ for each $i$.
\end{claim}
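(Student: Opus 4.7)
The plan is to construct $C$ as a nested Cantor-type set in $[0,1]^n$. Take rapidly increasing integers $N_1 < N_2 < \ldots$ (with the growth rate dictated by the $W_i$'s), and at stage $k$ subdivide each previously retained cube into $N_k^n$ axis-parallel sub-cubes of side $s_k := \prod_{j \le k} N_j^{-1}$, keeping $M_k := \lceil N_k^d \rceil$ of them. The case $d = 0$ is trivial (take $C$ to be a single point), so assume $d > 0$. Standard estimates for such generalized Cantor sets yield $\dim C = d$: the upper bound from the natural cover by retained cubes, and the lower bound from a Frostman mass distribution assigning mass $\prod_{j \le k} M_j^{-1}$ to each stage-$k$ cube, provided the sub-cubes at each stage are chosen with enough spread.

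The choice of which sub-cubes to retain is made via a diagonal argument: at stage $k$ we simultaneously address $W_1, \ldots, W_k$. For each $i \le k$ we want the $M_k$ retained sub-cubes inside a parent cube to project onto $W_i$ to cover at most $\lceil N_k^{d-l_i} \rceil$ cells of side $s_k$ in $W_i$. Geometrically, we cluster the sub-cubes into ``strips'' nearly parallel to $V_i := W_i^\perp$: about $N_k^{l_i}$ sub-cubes per strip, about $N_k^{d-l_i}$ strips, totalling $M_k$. For a rational $W_i$, this clustering is exact (align sub-cubes with the integer lattice in $V_i$); for irrational $W_i$, we approximate $V_i$ by a rational subspace and absorb the tilt error into $s_k$ by taking $N_k$ large enough. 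Different $i \le k$ may demand incompatible strip directions, but since the total demand $\sum_{i \le k} N_k^{d-l_i}$ is much less than $N_k^d = M_k$ for $N_k$ large, all demands can be met at once by intersecting the strip patterns and padding back to size $M_k$ with residual sub-cubes lying in all demanded strip systems.

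A box-counting bound using the resulting cover at scale $s_k$ then yields $\dim \proj_{W_i}(C) \le \max(0, d - l_i)$. The matching lower bound is automatic: since $C \subseteq \proj_{W_i}(C) + W_i^\perp$ and $\dim W_i^\perp = l_i$, one has $d = \dim C \le \dim \proj_{W_i}(C) + l_i$. The main obstacle is the irrational case combined with the diagonalization: exact clustering is impossible for an irrational $W_i$, and different $i$ may pull in incompatible directions. Both issues are resolved by letting $N_k$ grow fast enough as a function of $W_1, \ldots, W_k$ so that the cumulative approximation error is negligible relative to $s_k$, while simultaneously preserving the spread needed for the Frostman lower bound on $\dim C$.
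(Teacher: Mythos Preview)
Your direct Cantor-set approach is different from the paper's (the paper uses Falconer's large intersection classes $\mathcal{G}^s_n$ and a Baire category argument), and a construction along your lines \emph{can} be made to work. However, as written there is a genuine gap in the step where you claim that all projection constraints can be met simultaneously at every stage.

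Consider the simplest instance where the issue arises: $n=2$, $d=1$, $W_1$ the $x$-axis and $W_2$ the $y$-axis, so $l_1=l_2=1$ and both projections must have dimension $\max(0,d-l_i)=0$. In your scheme $M_k=N_k$, and you demand that the $N_k$ retained children project onto $\lceil N_k^{d-l_i}\rceil=1$ cell in each $W_i$. For $W_1$ this forces all children to lie in a single column of the $N_k\times N_k$ grid; for $W_2$ it forces them into a single row. The intersection of a single column and a single row is a single cube, so there is no way to ``pad back'' to $M_k=N_k$ cubes while staying in both strip systems. Your counting heuristic $\sum_{i\le k} N_k^{d-l_i}\ll N_k^d$ is satisfied here ($1+1<N_k$), but it does not control the size of the intersection of the strip patterns. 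More generally, whenever at least two of the $W_i$ have $l_i\ge d$, the single strips you allot to them will intersect in far fewer than $M_k$ cubes. Allowing more strips does not help: with $a_i$ strips for $W_i$ you get $a_1a_2$ cubes in the intersection, and $a_1a_2\ge N_k$ forces $\max(a_1,a_2)\ge N_k^{1/2}$, so the resulting projection dimensions cannot both be pushed to $0$.

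The claim itself is still true in this regime (there \emph{do} exist compact $C\subset\R^2$ with $\dim C=1$ and both coordinate projections of dimension $0$), but achieving it with a nested-cube construction requires a different bookkeeping: for instance, addressing only one $W_i$ per stage in a rotating schedule, and letting $N_k$ grow super-exponentially so that the stages where $W_i$ is not addressed are negligible for the $\liminf$ defining $\dim\proj_{W_i}(C)$. The paper sidesteps all of this by working with dense $G_\delta$ sets and the closure of the $\mathcal{G}^s_n$ classes under countable intersection, which handles the case $l_i\ge d$ via Baire category rather than combinatorics.
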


Therefore Theorem~\ref{thm} and Claim~\ref{falconer} give the following.

\begin{cor}\label{corgen} 
Suppose that $S=\bigcup_{i=1}^\infty S_i$, where each $S_i$ is a 
subset of an affine subspace $V_i$ with $\dim S_i=\dim V_i$. 
For each $i$, let $W_i=\Span\{V_i\}^\perp$.
Let $d\in [0,n]$ be arbitrary. 
Then the smallest possible dimension of a set $A$ that contains a scaled copy of $S$ centered at each point of some $d$-dimensional set $C$ is 
$\sup_i (\max(0, d - \codim W_i) + \dim S_i)$.
\end{cor}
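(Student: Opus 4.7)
The plan is to derive Corollary~\ref{corgen} by simply combining the two general tools already established: Theorem~\ref{thm}, which pins down the minimal and typical dimension for a \emph{fixed} compact set of centers $C$ in terms of the projection dimensions $\dim C_i'$, and Claim~\ref{falconer}, which asserts the existence of a $d$-dimensional compact set whose projections onto any prescribed countable family of subspaces each realize the maximal possible dimension drop. With both in hand, the lower and upper bounds will meet.

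For the lower bound, I would fix an arbitrary set $C$ with $\dim C = d$ and an arbitrary $A$ containing a scaled copy of $S$ centered at each point of $C$. Theorem~\ref{thm}(i) gives
$$\dim A \ge \sup_i (\dim C_i' + \dim S_i),$$
where $C_i'$ is the orthogonal projection of $C$ onto $W_i$. Since the orthogonal projection $\pi_i : \R^n \to W_i$ is $1$-Lipschitz with fibers lying in affine subspaces of dimension $\codim W_i$, the standard Fubini-type slicing inequality used already for \eqref{triviineq} gives $\dim C \le \dim C_i' + \codim W_i$, so $\dim C_i' \ge d - \codim W_i$. Combining with the trivial $\dim C_i' \ge 0$, I obtain $\dim C_i' \ge \max(0, d - \codim W_i)$, and hence
$$\dim A \ge \sup_i \bigl( \max(0, d - \codim W_i) + \dim S_i \bigr).$$
This holds for every admissible pair $(C, A)$.

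For the matching upper bound, I would apply Claim~\ref{falconer} to the sequence $(W_i)$ with $l_i = \codim W_i$ to obtain a $d$-dimensional compact set $C \subset \R^n$ whose projections satisfy $\dim C_i' = \max(0, d - \codim W_i)$ for every $i$ simultaneously. For this particular choice of $C$, Theorem~\ref{thm}(ii) produces a typical $K \in \cK$ for which $A_{K,S}$ contains a scaled copy of $S$ centered at each point of $C$ and
$$\dim A_{K,S} \le \sup_i (\dim C_i' + \dim V_i) = \sup_i \bigl( \max(0, d - \codim W_i) + \dim S_i \bigr),$$
where the final equality uses the hypothesis $\dim S_i = \dim V_i$. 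This matches the lower bound, completing the proof.

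The argument itself is therefore essentially a bookkeeping step: Theorem~\ref{thm} handles the dimension computation for a fixed $C$, and the optimization over $C$ is reduced to the single existence statement in Claim~\ref{falconer}. The real technical content lies in Claim~\ref{falconer}, i.e., in constructing one compact set of dimension $d$ whose projections onto countably many prescribed subspaces all have the smallest dimension allowed by the Lipschitz bound. That is the step I expect to be the main obstacle, and it will presumably be handled separately by a self-similar or generalized Cantor-type construction tailored to the family $\{W_i\}$.
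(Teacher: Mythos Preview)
Your proposal is correct and follows essentially the same route as the paper: the lower bound comes from Theorem~\ref{thm}(i) together with the projection inequality $\dim C_i' \ge \max(0, d - \codim W_i)$, and the upper bound comes from choosing $C$ via Claim~\ref{falconer} and then applying Theorem~\ref{thm}(ii), using $\dim S_i = \dim V_i$ to make the two bounds coincide. Your final paragraph correctly identifies Claim~\ref{falconer} as the only substantive ingredient left to prove.
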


Now we claim that 
Theorem~\ref{skeleton} is a special case of Corollary~\ref{corgen}.
Indeed, if $S$ is a $k$-skeleton of a polytope, 
then for each $i$ we have $\dim S_i = \dim V_i =k$, and 
$W_i$ has co-dimension either $k+1$ if 
$0\not\in V_i$, or $k$ if $0\in V_i$. 
Thus $\max(0, d - \codim W_i) + \dim S_i$ is either $\max(k,d-1)$ if $0\not\in V_i$, or $\max(k,d)$ if $0\in V_i$.

It remains to prove Claim~\ref{falconer} and Lemma~\ref{main}.
The following simple proof is based on an argument that 
was communicated to us by
K.~J.~Falconer.

\begin{proof}[Proof of Claim~\ref{falconer}]
We can clearly suppose that $d>0$ and $l_i\in\{1,\ldots,n-1\}$.
For $0<s\le n$, Falconer \cite{Fa94} 
introduced $\cG^s_n$ as the class of those $G_\de$ 
subsets $F\subset\R^n$ for which $\bigcap_{i=1}^\infty f_i(F)$
has Hausdorff dimension at least $s$
for all sequences of similarity transformations
$\{f_i\}_{i=1}^\infty$.
Among other results, Falconer proved
that $\cG^s_n$ is closed under countable intersection, and
if $F_1\in\cG^s_n$ and $F_2\in\cG^t_m$ then $F_1\times F_2 \in \cG^{s+t}_{n+m}$.
Examples of sets of $\cG^s_n$ with Hausdorff dimension exactly $s$ are also
shown in \cite{Fa94} for every $0<s\le n$.

For $l<d$, let $E_l\in\cG^{d-l}_{n-l}$ with $\dim E_l=d-l$, and
for $l\ge d$ let $E_l$ be a dense $G_\delta$ subset of $\R^{n-l}$ with 
$\dim E_l=0$.
Let $F_l= E_l \times \R^l\subset\R^{n-l}\times\R^l$. 
Clearly, the projection of $F_l$ onto $\R^{n-l}$ has Hausdorff dimension 
$\max(0,d-l)$.

Now we show that $F_l\in\cG^d_n$. This follows from the product rule
mentioned above if $l< d$. In the case $l\ge d$,
we need to prove that $\dim(\bigcap_{i=1}^\infty f_i(E_l\times \R^l))\ge d$
for any sequence of similarity transformations $\{f_i\}_{i=1}^\infty$.
Let $V$ be an $(n-l)$-dimensional subspace of $\Rn$ which is generic in
the sense that it intersects all the countably many $l$-dimensional affine
subspaces $f_i(\{0\}\times \R^l)$ in a single point. 
Then for each translate $V+x$ of $V$, the set
$f_i(E_l\times\R^l)\cap (V+x)$ is similar to the dense $G_\de$ set $E_l$,
hence $(\bigcap_{i=1}^\infty f_i(E_l\times \R^l))\cap (V+x)$ is
nonempty for each $x$, which implies that indeed
$\dim(\bigcap_{i=1}^\infty f_i(E_l\times \R^l))\ge l \ge d$.

For each $i$, let $H_i$ be a rotated copy of $F_{l_i}$ with projection
of Hausdorff dimension $\max(0,d-l_i)$ onto $W_i$.
Since each $H_ i$ is of class  $\cG^{d}$, the intersection  
$D:= \bigcap_{i=1}^\infty H_i$ is of class $\cG^{d}$. In particular, its Hausdorff dimension is at least $d$. It is also clear that the projection of $D$ onto each $W_i$ has Hausdorff
dimension at most $\max(0,d-l_i)$.

Now $D$ has all the required properties except that 
it might have Hausdorff dimension larger than $d$, and it is not compact
but $G_\de$. 
If $\dim D>d$, then let $C$ be a compact subset of $D$ with Hausdorff
dimension $d$. 
Then for each $i$, the projection of $C$ onto $W_i$ is at most $\max(0,d-l_i)$,
but it cannot be smaller since $W_i$ has co-dimension $l_i$.
If $\dim D=d$ then let $D_j$ be compact subsets of $D$ with $\dim D_j\to d$
and let $C$ be a disjoint union of shrunken converging copies of $D_j$ and 
their limit point. 
\end{proof}

\begin{proof}[Proof of Lemma~\ref{main}]
By \eqref{triviAKS}, it is enough to show that
$\dim A_{K,V}\le \dim C'+\dim V$ holds for a typical $K\in\cK$.
Write $V=v+V_0$ where $V_0$ is a $k$-dimensional linear subspace, $v\in\R^n$
and $v\perp V_0$. 
Without loss of generality we can assume that $v=0$ or $|v|=1$.
Let $x'$ denote the projection of a point $x$ onto $\Span\{V\}^\perp$,
and let $\proj x\in\R$ denote the projection of $x$ onto $\R v$.
(Clearly, if $v=0$, then $\proj x=0$.)

Let $\cK^n$ denote the space of all nonempty compact subsets of $\R^n$, equipped with the Hausdorff metric. Then 
$$A=A_{K,V}=\bigcup_{(x,r)\in K}x'+(\proj x+r)v+V_0,$$ so
$$\dim A=\dim V_0+\dim\left(\bigcup_{(x,r)\in K}x'+(\proj x+r)v\right)=k+\dim F(K),$$
where $F:\,\cK\to\cK^n$ is defined by $$F(K)=\bigcup_{(x,r)\in K}x'+(\proj x+r)v.$$ It is easy to see that $F$ is continuous. 

Since for every open set $G\su\Rn$ and for every compact set $K\su G$ we have
$\dist(\R^n\sm G, K)>0$, it follows that for any open set $G\subset\R^n$, $\{K\in\cK^n: K\subset G\}$ is an open subset of $\cK^n$. 
Consequently, for any $s, \de, \eps>0$, the set of those compact sets $K\in\cK^n$
that have an open cover $\bigcup G_i$ where 
$\sum_i (\diam G_i)^s<\eps$ and $\diam G_i<\de$ for each $i$ is an open
subset of $\cK^n$. Therefore 
for any $s>0$, $\{K\in \cK^n: \dim K \le s\}$ is a $G_\de$ subset of $\cK^n$.
Since $F$ is continuous, $\{K\in \cK : \dim F(K) \le s\}$ is a $G_\de$ subset of $\cK$.

\medskip
We finish the proof by showing that $\{K\in \cK : \dim F(K) \le \dim C'\}$ is dense. To obtain this, for every compact set $L\in\cK$ we construct another compact set $K\in \cK$ arbitrary close to $L$, such that 
$\{\proj x+r:(x,r)\in K\}$ is finite and so $F(K)$ is covered by a finite union of copies of $C'$. 
For a given $L\in\cK$, such a $K\in\cK$ can be constructed by choosing a sufficiently small $\eps>0$
and letting
$$K:=\{(x,r):\,\exists r'\text{ s.t. } (x,r')\in L,\,\proj x+r\in\eps\Z,\,|r-r'|\le\eps\}.\mbox{\qedhere}$$
\end{proof}

\section{Scaled and rotated copies}%

In this section, we study the problem when we are allowed to \emph{scale and rotate} copies of $S$. That is, now our aim is to
find for a given set $S\su\R^n$ and a nonempty compact set of centers $C\su\Rn$ 
the minimal possible value of $\dim A$, where $A$ contains a scaled and
rotated copy of $S$ centered at each point of $C$.
(That is, for every $x\in C$, there exist $r>0$ and $T\in SO(n)$ such that
$x+rT(S)\su A$.)

For a fixed nonempty compact set $C\su\Rn$ and a closed interval $I\subset (0,\infty)$,
let $\cK'$ denote the space of all compact sets $K\su C\times I \times SO(n)$
that have full projection onto $C$. 
We fix a metric on $SO(n)$ that induces the natural topology 
and equip $\cK'$ with the Hausdorff metric.
Then $\cK'$ is also a complete metric space, so again we can talk about
typical $K\in\cK'$ in the Baire category sense.
Now for $K\in\cK'$ and $S\su\Rn$, we let
\begin{equation}
\label{A'}
A'_{K,S}: = \bigcup_{(x,r,T)\in K} x+rT(S).
\end{equation}
Note that $A'_{K,S}$ contains a scaled and rotated copy of $S$
centered at each point of $C$.

Again, first we consider the case when $S$ is an affine subspace, but
we now exclude the case when $S$ contains $0$.

\begin{lemma}
\label{l:rotated}
Let $V$ be an
affine subspace of $\Rn$ such that $0\not\in V$ and
let $C\su\Rn$ be an arbitrary nonempty compact set. 
Then for a typical $K\in\cK'$, and for $A'_{K,V}$ defined by \eqref{A'},
$$
\dim A'_{K,V}=\dim V.
$$
\end{lemma}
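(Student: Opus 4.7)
The lower bound $\dim A'_{K,V}\ge \dim V=k$ is automatic, since $A'_{K,V}$ contains the $k$-dimensional affine subspace $x+rT(V)$ for any $(x,r,T)\in K$. Following the template of Lemma~\ref{main}, the plan is to prove that $\cG:=\{K\in\cK':\dim A'_{K,V}\le k\}$ is both $G_\de$ and dense in $\cK'$.

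\emph{$G_\de$-ness.} I would exhaust $V$ by its compact truncations $V^{(M)}:=V\cap\overline{B(0,M)}$, so that $A'_{K,V}=\bigcup_{M\in\N}A'_{K,V^{(M)}}$, with each $A'_{K,V^{(M)}}$ compact as the image of $K\times V^{(M)}$ under the continuous evaluation map $(x,r,T,w)\mapsto x+rT(w)$. Then $K\mapsto A'_{K,V^{(M)}}$ is continuous from $\cK'$ into $\cK^n$, and since $\{\dim\le k\}$ is already $G_\de$ in $\cK^n$ by the argument in the proof of Lemma~\ref{main}, $\cG$ is a countable intersection of $G_\de$ sets.

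\emph{Density.} Write $V=v+V_0$ with $V_0$ linear, $v\perp V_0$ and $v\ne 0$ (this is where $0\notin V$ enters). The crucial observation is the realizability equation: a $k$-dimensional affine subspace $U=p+W$ (with $p\in W^\perp$) coincides with $x+rT(V)$ exactly when $T(V_0)=W$ and $rT(v)=p-\proj_{W^\perp}(x)$, which pins
\[
r=\frac{|p-\proj_{W^\perp}(x)|}{|v|}
\]
and the direction of $T(v)\in W^\perp$, leaving a rotational freedom of $SO(k)\times SO(n-k-1)$ in the choice of $T$. Equivalently, $U$ is realizable at $x$ with $r\in I$ iff the perpendicular distance from $x$ to $U$ lies in $|v|\cdot I$. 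Given $L\in\cK'$ and $\eta>0$, I would choose a finite list $U_1,\ldots,U_N$ of $k$-dimensional affine subspaces, fine enough to lie within $\eta$ of every affine subspace in the compact image $\{x+rT(V):(x,r,T)\in L\}$ and such that each $x$ arising in $L$ admits some realizable $U_j$ close to its $L$-realization; then I set
\[
K:=\{(x,r,T)\in C\times I\times SO(n)\,:\,\dist((x,r,T),L)\le\eta,\ x+rT(V)\in\{U_1,\ldots,U_N\}\}.
\]
This $K$ is closed in the compact ambient, hence lives in $\cK'$, has Hausdorff distance at most $\eta$ from $L$, has full projection onto $C$, and satisfies $A'_{K,V}\subseteq U_1\cup\cdots\cup U_N$, a finite union of $k$-dimensional affine subspaces, so $\dim A'_{K,V}=k$.

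The main obstacle is arranging that the net $\{U_j\}$ is simultaneously close to every affine subspace appearing in $L$ \emph{and} admissible at the relevant centers $x\in C$, i.e., with perpendicular distance in $|v|\cdot I$. Admissibility is slightly fragile when the original $r'$ for some $(x,r',T')\in L$ is near an endpoint of $I$, but one has enough flexibility because shifting $U_j$ along $W_j^\perp$ moves this perpendicular distance continuously, and the residual $SO(k)\times SO(n-k-1)$ rotational freedom in $T$ allows small adjustments of the center $x+rT(v)$ without leaving a neighborhood of the original $(r',T')$; these two degrees of freedom together let one always find an admissible $U_j$ close to the $L$-realization of each $x$.
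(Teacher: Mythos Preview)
Your proposal is correct and follows essentially the same approach as the paper's own proof: the $G_\delta$ step via continuity of a truncated version of $K\mapsto A'_{K,V}$ (you truncate $V$ to $V^{(M)}$, the paper intersects with $[-N,N]^n$) and the density step via a finite list of target $k$-planes $V_1,\ldots,V_N$ realized inside an $\eta$-thickening of $L$ are the same ideas. Your realizability discussion (perpendicular distance in $|v|\cdot I$, residual $SO(k)\times SO(n-k-1)$ freedom, and the endpoint adjustment) makes explicit exactly what the paper compresses into the phrase ``by compactness and since $0\notin V$''.
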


\begin{proof}
Clearly it is enough to show that $\dim A'_{K,V}\le\dim V$
holds for a typical $K\in\cK'$.
For any $N\in\N$, we define $F_N':\cK'\to\cK^n$ by
$F_N'(K)=A'_{K,V}\cap [-N,N]^n$. It is easy to see that $F'_N$ is continuous.
Then exactly the same argument as in the proof of  Lemma~\ref{main}
gives that $\{K\in\cK' : \dim F_N'(K)\le s\}$ is a $G_\de$ 
subset of $\cK'$, which implies that 
$\{K\in\cK' : \dim A'_{K,V}\le s\}$ is also $G_\de$.

So it remains to prove that  $\{K\in\cK' : \dim A'_{K,V}\le \dim V\}$ is dense. 
Fix $\eps>0$. 
Then, by compactness and since $0\not\in V$, 
there exists an $N=N(\eps)\in\N$ and $(\dim V)$-dimensional affine
subspaces $V_1,\ldots,V_{N}$ such that
for any $(x,r,T)\in C\times I \times SO(n)$
there exists $(r',T')\in I \times SO(n)$ 
within $\eps$ distance of $(r,T)$ such that $x+r'T'(V)=V_i$ for some $i\le N$.
Thus, given any compact set $L\in\cK'$ and $\eps>0$,
we can take
$$
K=\{(x,r',T')\ :\ 
x+r' T'(V)\in\{V_1,\ldots,V_{N}\}\ \} \cap L_\eps,
$$
where 
$$
L_\eps=\{(x,r',T')\ :\ \exists(r, T)\text{ s.t. } (x,r,T)\in L,\ \dist((r',T'),(r,T))\le\eps\}.
$$ 
It follows that $K \in \cK'$ and the (Hausdorff) distance between $K$ and $L$ is at most $\eps$. Furthermore, $\dim A'_{K,V}=\dim V$, since
$A'_{K,V}$ can be covered by
finitely many $(\dim V)$-dimensional affine spaces.
\end{proof}

By taking a countable intersection of residual sets we obtain the following corollary of Lemma~\ref{l:rotated}, 
which clearly implies the general form of (2) of Corollary~\ref{c:dim}.

\begin{theorem}
Let $C$ be an arbitrary nonempty compact subset in $\R^n$, $k<n$ and let $S\su\R^n$ be a $k$-Hausdorff-dimensional set that can be covered by a countable union of $k$-dimensional affine subspaces that do not contain $0$.
Then for a typical $K\in\cK'$, the set $A'_{K,S}$ 
contains a scaled and rotated copy of $S$ centered at every point of $C$, and
$\dim A'_{K,S}=\dim S$. %
\end{theorem}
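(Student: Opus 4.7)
The plan is to reduce the statement to Lemma~\ref{l:rotated} by a countable decomposition and a standard Baire category argument, in the same spirit as the passage from Lemma~\ref{main} to Theorem~\ref{thm}.

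By hypothesis we may write $S=\bigcup_{i=1}^\infty S_i$, where each $S_i$ is contained in a $k$-dimensional affine subspace $V_i\su\R^n$ with $0\not\in V_i$. Apply Lemma~\ref{l:rotated} to each $V_i$: this produces a dense $G_\de$ set $\cG_i\su\cK'$ such that $\dim A'_{K,V_i}=\dim V_i=k$ for every $K\in\cG_i$. Set $\cG:=\bigcap_{i=1}^\infty \cG_i$. By the Baire category theorem (and since $\cK'$ is a complete metric space), $\cG$ is again a dense $G_\de$ subset of $\cK'$, i.e.\ a residual set.

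Now fix any $K\in\cG$. Since $K$ has full projection onto $C$, for each $x\in C$ there exist $r\in I$ and $T\in SO(n)$ with $(x,r,T)\in K$, so by definition $x+rT(S)\su A'_{K,S}$; hence $A'_{K,S}$ contains a scaled and rotated copy of $S$ centered at every point of $C$. For the dimension, observe that $S_i\su V_i$ implies $x+rT(S_i)\su x+rT(V_i)$ for every $(x,r,T)\in K$, and therefore $A'_{K,S_i}\su A'_{K,V_i}$. Using the fact that the Hausdorff dimension of a countable union is the supremum of the dimensions of its members, we obtain
$$
\dim A'_{K,S}=\dim\bigcup_{i=1}^\infty A'_{K,S_i}\le \sup_i \dim A'_{K,V_i}\le k=\dim S.
$$
For the matching lower bound, pick any $(x,r,T)\in K$; then $x+rT(S)\su A'_{K,S}$, and since Euclidean similarities preserve Hausdorff dimension, $\dim A'_{K,S}\ge \dim(x+rT(S))=\dim S$. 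Combining both inequalities yields $\dim A'_{K,S}=\dim S$ for every $K\in\cG$, which proves the theorem.

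There is essentially no obstacle beyond Lemma~\ref{l:rotated}: the main content has already been absorbed there, and what remains is the purely formal step of intersecting countably many residual sets and using the countable stability of Hausdorff dimension. The only point to be careful about is that the hypothesis ``$0$ is not contained in any of the covering affine subspaces'' is precisely what allows Lemma~\ref{l:rotated} to be applied to each $V_i$; if one of them contained $0$, the compactness argument at the end of that lemma's proof (which produces the finitely many candidate affine subspaces $V_1,\dots,V_N$) would fail because arbitrarily small scalings would collapse $V_i$ toward $\{0\}$ rather than toward a fixed family of $k$-dimensional affine subspaces.
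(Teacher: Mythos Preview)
Your proof is correct and follows exactly the approach the paper indicates: the paper derives the theorem from Lemma~\ref{l:rotated} in one line (``by taking a countable intersection of residual sets''), and you have simply spelled out that line in full detail, including the routine lower bound $\dim A'_{K,S}\ge\dim S$ coming from a single copy of $S$. Your closing remark about why $0\notin V_i$ is needed is slightly imprecise---when $0\in V_i$ the subspace is linear, so scaling does nothing and the real obstruction is that $x+rT'(V_i)$ must pass through $x$, making a finite covering family impossible---but this does not affect the proof itself.
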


\begin{remark}
\label{Kakeya}
If $0\in V$ and $V$ is $k$-dimensional then 
a scaled and rotated copy of $V$
centered at $x$ is a $k$-dimensional affine subspace that contains $x$.
Therefore a set $A$ that contains a scaled and rotated copy
of $V$ centered at every point 
of $C$ is a set that contains a $k$-dimensional affine subspace
through every point of $C$.
The Lebesgue measure of such an $A$ is clearly bounded below by the Lebesgue measure of $C$. By generalizing the planar result of Davies \cite{Da} to higher dimensions, Falconer \cite{Fa86} proved there is such an $A$ which attains this lower bound. In Section~\ref{s:Alan} we show that the Lebesgue measure of a typical such $A$ is in fact this minimum.
On the other hand, 
to find the minimal dimension of such an $A$ is closely related to
the Kakeya problem, especially in the special case $k=1$, and for some
nontrivial $C$ this problem is as hard as the Kakeya problem.
\end{remark}

\section{Rotated copies: dimension}
\label{s:rotated}

Now we study what happens if we allow rotation 
but do not allow scaling.
As we mentioned in the introduction, it is \emph{not} true that for a general nonempty compact set of centers $C$, a typical construction has minimal dimension. However, we will show that this is true provided that $C$ has full dimension. 

The following lower estimate can be found in \cite{HKM}:

\begin{fact}\label{t:atleastkplus1}
Let $0 \leq k < n$ be integers, and let $S \su \R^n$ be a $k$-Hausdorff-dimensional set that can be covered by a countable union
of $k$-dimensional affine subspaces that do not contain $0$. 
Let $\emptyset\neq C \su \R^n$ and $A \su \R^n$ be such that 
for every $x \in C$, there exists a rotated copy of $S$ centered at $x$ contained in $A$. 
Then $\dim A \geq \max\{ k, k+ \dim C - (n-1) \}$. 

In particular, if $\dim C=n$ then  $\dim A \ge k+1$.
\end{fact}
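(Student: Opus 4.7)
The bound $\dim A \ge k$ is immediate: for any single $x \in C$ the set $x + T_x(S) \subseteq A$ is an isometric copy of $S$ and hence has Hausdorff dimension $k$.

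For the substantive inequality $\dim A \ge k + \dim C - (n-1)$, the first step is to reduce to the case where $S$ lies in a single affine $k$-plane. Since $\dim S = k$ and $S$ is covered by countably many $k$-dimensional affine subspaces $V_i$ with $0 \notin V_i$, countable stability of Hausdorff dimension gives $\sup_i \dim(S \cap V_i) = k$. Replacing $S$ by $S \cap V_i$ for an appropriate $i$ (losing at most $\varepsilon$ in the dimension estimate, which we let tend to $0$ at the end) reduces to $S \subseteq V$ for a single affine $k$-plane $V$. Write $V = v_0 + V_0$ with $V_0$ the linear part, $v_0 \perp V_0$ and $d := |v_0| = \dist(0,V) > 0$.

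Next, fix any $s^* \in S$ and define $\phi \colon C \to A$ by $\phi(x) = x + T_x(s^*)$. Each fibre $\phi^{-1}(a)$ sits inside the $(n-1)$-sphere $\{x : |x-a| = |s^*|\}$, so $\dim \phi(C) \ge \dim C - (n-1)$. This already places a subset of $A$ of dimension $\dim C - (n-1)$, but uses only a single point of $S$. To gain the extra $+k$, the plan is to exploit that $A$ also contains the full $k$-dimensional piece $\phi(x) + T_x(S - s^*)$ through each $\phi(x)$; since $T_x(v_0) \perp T_x(V_0)$, this piece sits in the hyperplane through $\phi(x)$ perpendicular to the radial direction $\phi(x) - x$. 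Combining the base set $\phi(C)$ with these transversal $k$-dimensional fibres, via a Marstrand-type slicing / Fubini-type inequality, should produce $\dim A \ge k + \dim C - (n-1)$; the ``in particular'' statement for $\dim C = n$ follows at once.

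The main obstacle is that the fibre plane $T_x(V_0)$ varies with $x$, so $\bigcup_{x \in C}(\phi(x) + T_x(V_0))$ is not a genuine product over its base and a direct slicing argument fails. The technical resolution, carried out in \cite{HKM}, proceeds via a $\delta$-discretised tube-incidence (equivalently, energy/potential-theoretic) estimate for the family of affine $k$-planes at distance $d$ from the points of $C$, which converts the pointwise $k$-dimensional lower bounds on the fibres into a global dimension bound on $A$.
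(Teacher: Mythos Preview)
The paper does not prove this statement at all: it is recorded as a \emph{Fact} and attributed to \cite{HKM} with the sentence ``The following lower estimate can be found in \cite{HKM}.'' Your proposal ultimately does the same thing --- the final paragraph explicitly hands off the real work to \cite{HKM} --- so in that sense you and the paper agree.

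That said, the heuristic sketch you give before deferring to \cite{HKM} contains a genuine gap worth flagging. The map $\phi(x)=x+T_x(s^*)$ is built from an arbitrary \emph{choice} of rotation $T_x$ for each $x$, so it carries no regularity whatsoever (it need not even be Borel). The inequality $\dim\phi(C)\ge\dim C-(n-1)$ does not follow merely from the fibres lying on $(n-1)$-spheres: the Fubini-type bound $\dim C\le\dim\phi(C)+\sup_a\dim\phi^{-1}(a)$ is false for general maps and requires, at minimum, a Lipschitz or locally bi-Lipschitz structure that $\phi$ does not have. You correctly sense this problem in your last paragraph (``the fibre plane $T_x(V_0)$ varies with $x$, so \ldots\ a direct slicing argument fails''), but the difficulty already appears one step earlier, at the level of the $0$-dimensional ``base'' $\phi(C)$, not only at the $k$-dimensional fibres. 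The actual argument in \cite{HKM} bypasses any such pointwise choice of $T_x$ and works instead with the full incidence set of pairs (point of $C$, admissible affine $k$-plane), which is what makes the energy/discretisation estimate go through.
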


\begin{remark}
If instead of fixing $C$, we fix only the dimension $d$ of $C$, and $S$ can be covered by one $k$-dimensional affine subspace $V$, then 
the following simple examples show that the estimate in Fact~\ref{t:atleastkplus1} is sharp. 
Without loss of generality we can assume that $V$ is at unit distance from $0$.
For $d \leq n-1$, we can take $A = \R^k \times \{0\} \subset \R^{n}$ and take $C$ to be a $d$-dimensional subset of $\R^k \times S^{n-k-1}$, 
where $S^m$ denotes the unit sphere in $\R^{m+1}$ centered at $0$.
For $d = n-1 + s$, where $s \in [0, 1]$, 
let $E \subset \R^{n-k}$ be an $s$-dimensional subset of a line
and let $F \subset \R^{n-k}$ be the set with a copy of $S^{n-k-1}$ 
centered at every point of $E$. It is easy to show that $\dim F = n-k-1+s$. 
Let $C = \R^k \times F$ and $A = \R^k \times E$.
In both cases $A$ contains a rotated copy of $S$ centered at every point of $C$,
$\dim C=d$ and $\dim A=\max\{k, k+\dim C - (n-1)\}$.

If $S$ can be covered by two distinct $k$-dimensional affine subspaces but
cannot be covered by one, then this question becomes much more difficult. 
Consider, for example, the case 
when $S$ consists of two points, both at distant $1$ from $0$, so now
$A$ contains two distinct points at distance $1$ from every point of a $1$-dimensional set $C \subset \R^2$. The discussion in the introduction implies that if we take $C = S^1$, then $\dim A \geq 1$. We do not know if there exists a set $C$ with $\dim C = 1$ for which there is such a set $A$ with $\dim A < 1$.
\end{remark}

Our goal is to show that for every fixed $C$ with $\dim C = n$, the estimate $\dim A \geq k + 1$ in Fact~\ref{t:atleastkplus1} is always sharp. Moreover, we construct sets of Hausdorff dimension $k+1$ that contain
the $k$-skeleton of an
$n$-dimensional rotated polytope of \emph{every} size centered at \emph{every} point.
More precisely, we want to construct a set $A$ that contains
a rotated copy of every positive size of a given set $S\su\R^n$ centered at 
every point of a given nonempty compact set $C$. 
(That is, for every $x\in C$ and $r>0$ there exists $T\in SO(n)$ such that
$x+rT(S)\su A$.) Instead of every $x \in C$ and $r>0$ we will guarantee only every $(x,r)$
from each fixed nonempty compact set $\J\su \R^n \times (0,\infty)$. By taking 
countable unions, we get the desired construction for every $(x, r) \in \R^n \times (0, \infty)$.

For a fixed nonempty compact set $\J\su \R^n \times (0,\infty)$, let $\cK''$ denote the space of all compact sets 
$K\su \J \times SO(n)$ that have full projection onto $\J$.
Again, by taking a metric on $SO(n)$ that induces the natural topology
and equipping $\cK''$ with the Hausdorff metric, 
$\cK''$ is also a complete metric space, so again we can talk about
typical $K\in\cK''$ in the Baire category sense.

Now for any $K\in\cK''$ and $S\su\Rn$, the set
\begin{equation}
\label{A''}
A''_{K,S}: = \bigcup_{(x,r,T)\in K} x+rT(S)
\end{equation}
contains a rotated copy of $S$
of scale $r$ centered at $x$ for every $(x,r) \in \J$.
Note that taking $\J=C \times \{1\}$ gives us the special case when only 
rotation is used.

Again, we start with the case when  $S$ is a $k$-dimensional 
 ($0\le k<n$) affine subspace of $\Rn$
that does not contain the origin. 
Note that if $d=\dist(S,0)$ then $x+rT(S)$
is at distance $rd$ from $x$.
This motivates the following easy 
deterministic $(k+1)$-dimensional construction.

%First we show that in this case we can easily get 
%a deterministic $(k+1)$-dimensional construction. 

\begin{prop}
For any integers $0\le k<n$ there exists a Borel set $B\su\R^n$ of Hausdorff 
dimension $k+1$ that contains a $k$-dimensional affine subspace at every
positive distance from every point of $\R^n$.
\end{prop}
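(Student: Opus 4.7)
My plan is to give a direct, elementary construction, bypassing the Baire category machinery used earlier in the paper. First I would fix an arbitrary $(k+1)$-dimensional linear subspace $V_0 \subset \R^n$, choose a countable dense subset $\{q_i\}_{i=1}^\infty$ of $V_0^\perp$ (e.g.\ a rational lattice), and define
$$B := \bigcup_{i=1}^\infty (q_i + V_0).$$
Then $B$ is an $F_\sigma$ set (hence Borel), and as a countable union of $(k+1)$-dimensional affine subspaces it has Hausdorff dimension exactly $k+1$.

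Next I would verify that $B$ contains a $k$-dimensional affine subspace at every positive distance from every point. Given $x \in \R^n$ and $r > 0$, by the density of $\{q_i\}$ in $V_0^\perp$ I can select $i$ so that $d := \dist(x, q_i + V_0) < r$. Inside the affine space $q_i + V_0$ I would then pick any $k$-dimensional affine subspace $V$ whose intrinsic distance from the orthogonal projection $p$ of $x$ onto $q_i + V_0$ is exactly $\sqrt{r^2 - d^2}$; such a $V$ exists since $\dim(q_i + V_0) = k+1 \ge 1$. The Pythagorean theorem then gives $\dist(x, V) = r$, while $V \subset q_i + V_0 \subset B$ by construction.

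There is essentially no obstacle here. The only content is the Pythagorean identity, which holds because $x - p$ lies in $V_0^\perp$ and is therefore orthogonal to every direction in $q_i + V_0$. The edge case $k+1 = n$ is handled trivially: take $V_0 = \R^n$ and $B = \R^n$, which already contains every affine hyperplane.
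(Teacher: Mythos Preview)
Your proof is correct and follows essentially the same approach as the paper's: take a countable union of $(k+1)$-dimensional affine subspaces whose union is dense, then for given $x$ and $r$ pick one of these subspaces close enough to $x$ and select a $k$-dimensional affine hyperplane inside it at the correct distance via Pythagoras (the paper phrases this as taking a tangent plane to the sphere $W_i\cap S(x,r)$). The only cosmetic difference is that the paper allows any countable dense family of $(k+1)$-planes, whereas you use parallel translates of a single $V_0$ indexed by a dense set in $V_0^\perp$; your choice is a special case of theirs and makes the distance computation slightly more explicit.
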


\begin{proof}
Let $W_1, W_2,\ldots$ be a countable collection of $(k+1)$-dimensional 
affine subspaces of $\R^n$ such that $B:=\bigcup_i W_i$ is dense. 
Then $B$ is clearly a Borel set $B\su\R^n$ of Hausdorff dimension $k+1$, 
so all we need to show is that for any fixed $x\in\R^n$ and 
$r>0$ the set $B$ contains a $k$-dimensional affine subspace at distance $r$
from $x$. Choose $i$ such that $W_i$ intersects the interior of the ball 
$B(x,r)$. Then the intersection of $W_i$ and the sphere $S(x,r)$ is a sphere
in the $(k+1)$-dimensional affine space $W_i$, and any $k$-dimensional 
affine subspace of $W_i\su B$ that is tangent to this sphere 
is at distance $r$ from $x$. 
\end{proof}

The proof of the following lemma is based on the same 
idea as in the construction above.

\begin{lemma}
\label{l:rotatedeverysize}
Let $0\le k<n$ be integers and $V$ be a $k$-dimensional
affine subspace of $\Rn$ such that $0\not\in V$.
Let $\J\su \R^n\times(0,\infty)$ be an arbitrary nonempty compact set.
Then for a typical $K\in\cK''$, and for $A''_{K,V}$ defined by \eqref{A''},
$$
\dim A''_{K,V}\le k +1.
$$
\end{lemma}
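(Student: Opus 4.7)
The plan is to mirror Lemmas~\ref{main} and \ref{l:rotated}, splitting the argument into a $G_\de$ part and a density part, and then reducing density to a local geometric statement that I would prove with the implicit function theorem.

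The $G_\de$ part is routine: for each $N\in\N$, the map $F''_N:\cK''\to\cK^n$ defined by $F''_N(K)=(A''_{K,V}\cap[-N,N]^n)\cup\{0\}$ is continuous by exactly the argument in the proof of Lemma~\ref{main}, and $\{C\in\cK^n:\dim C\le k+1\}$ is $G_\de$ in $\cK^n$. Intersecting the preimages over $N\in\N$ shows that $\{K\in\cK'':\dim A''_{K,V}\le k+1\}$ is $G_\de$ in $\cK''$.

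For density, set $\tilde V:=\Span\{V\}$, a $(k+1)$-dimensional linear subspace of $\R^n$ since $0\notin V$. The key observation is that for any $(x_0,r_0,T_0)$, the $(k+1)$-dimensional affine subspace $W_0:=x_0+T_0(\tilde V)$ passes through $x_0$ (since $0\in\tilde V$) and contains $x_0+r_0T_0(V)$ (since $V\su\tilde V$). Fix $L\in\cK''$ and $\eps>0$; I would first establish the following local claim: for every $(x_0,r_0,T_0)\in L$ there is a neighborhood $U_0$ of $(x_0,r_0,T_0)$ such that every $(x,r,T)\in U_0$ admits $T'\in SO(n)$ with $\dist(T,T')<\eps$ and $x+rT'(V)\su W_0$. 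Granted this, compactness of $L$ yields a finite subcover $U_1,\dots,U_m$ with corresponding $W_1,\dots,W_m$, and one sets
$$K:=\{(x,r,T')\in J\times SO(n):\exists i,\ \exists(x,r,T)\in L\cap\bar U_i,\ \dist(T,T')\le\eps,\ x+rT'(V)\su W_i\}.$$
Routine verifications show $K$ is compact, has full projection onto $J$, lies within Hausdorff distance $\eps$ of $L$, and satisfies $A''_{K,V}\su\bigcup_i W_i$, hence $\dim A''_{K,V}\le k+1$.

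\emph{The main obstacle} is the local claim above, which is essentially a geometric implicit function statement. Writing $V=p+V_0$ with $p\perp V_0$ and $|p|=\dist(0,V)>0$, the inclusion $x+rT'(V)\su W_0$ unpacks into the two conditions $T'(V_0)\su T_0(\tilde V)$ and $T'(p)\in T_0(\tilde V)+(x_0-x)/r$. The first cuts out a smooth submanifold $\mathcal M\su SO(n)$ through $T_0$. On $\mathcal M$, I plan to show that the map $T'\mapsto [T'(p)+(x-x_0)/r]\in\R^n/T_0(\tilde V)$ has surjective differential at $T_0$: a tangent direction to $\mathcal M$ corresponds to a skew-symmetric $A$ with $AV_0\su\tilde V$, and decomposing $\R^n=V_0\oplus\R p\oplus\tilde V^\perp$ one sees that the block $A_{\perp p}:\R p\to\tilde V^\perp$ is not constrained. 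Hence $[T_0Ap]=[T_0(A_{\perp p})]$ ranges over the full $(n-k-1)$-dimensional quotient $\R^n/T_0(\tilde V)$ as $A$ varies. The implicit function theorem then yields a continuous local solution $(x,r)\mapsto T'(x,r)\in\mathcal M$ with $T'(x_0,r_0)=T_0$; shrinking $U_0$ makes $\dist(T',T_0)<\eps/2$, and combined with $\dist(T,T_0)<\eps/2$ this gives $\dist(T,T')<\eps$ by the triangle inequality, completing the claim.
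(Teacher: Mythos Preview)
Your proof is correct. The $G_\delta$ part matches the paper's, and your density argument via finitely many $(k+1)$-planes and compactness has the same overall shape, but the local geometric step is genuinely different.

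The paper chooses an auxiliary $(k+1)$-dimensional affine subspace $W\supset V$ with $0<\dist(W,0)<\dist(V,0)$, so that $W_0:=x+rT(W)$ meets the sphere $S(x,r)$ in a genuine $k$-sphere $S_0$, to which $x+rT(V)$ is tangent. Perturbing $W_0$ to a nearby $(k+1)$-plane $W_i$ (taken from a fixed countable dense family in $A(n,k+1)$) and taking a nearby tangent plane to $W_i\cap S(x,r)$ produces the desired $x+rT'(V)\subset W_i$; the whole argument is elementary synthetic geometry, with no differentiation on $SO(n)$.

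You instead take the \emph{linear} span $\tilde V=\Span\{V\}$, so that your $W_0=x_0+T_0(\tilde V)$ passes through $x_0$, and you solve the inclusion $x+rT'(V)\subset W_0$ by the implicit function theorem on the submanifold $\mathcal M=\{T':T'(V_0)\subset T_0(\tilde V)\}$ of $SO(n)$. Your surjectivity check (the $(\R p,\tilde V^\perp)$-block of a skew-symmetric $A$ with $AV_0\subset\tilde V$ is unconstrained, so $Ap$ hits all of $\tilde V^\perp$) is correct, and the triangle-inequality reduction to $\dist(T,T_0)<\eps/2$ is clean. What the paper's approach buys is that it avoids any smooth-manifold machinery and is immediately visual; what your approach buys is a systematic linear-algebra computation that generalizes mechanically and does not require the slightly ad hoc choice of a $W$ strictly between $0$ and $V$.
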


\begin{proof}
Without loss of generality we can assume that $V$ is at distance $1$ from the origin.

Let $A(n,k+1)$ be the space of all $(k+1)$-dimensional affine
subspaces of $\Rn$, equipped with a natural metric (for example the metric
defined in \cite[3.16]{Ma}), and 
let $W_1,W_2,\ldots$ be a countable dense set
in $A(n,k+1)$. Let $B=\bigcup_i W_i$. 

Exactly the same argument as in the proof of  Lemma~\ref{l:rotated}
gives that 
$\{K\in\cK'' : \dim A''_{K,V}\le s\}$ is $G_\de$ for any $s$, so
again it remains to prove that  $\{K\in\cK'' : \dim A''_{K,V}\le k +1\}$ is
dense in $\cK''$. 
Since $\dim B=k+1$, it is enough to show that 
$\{K\in\cK'' : A''_{K,V}\su B\}$ is dense in $\cK''$.

First we show that for any $(x,r,T)\in \J \times SO(n)$ 
and $\eps>0$, there exist $i\in\N$ and $T'\in SO(n)$ such that
$\dist(T,T')<\eps$ and $x+rT'(V)\su W_i$. 
We will also see from the proof that for the given $\eps>0$ and the above chosen $i$,
there exists a neighborhood of $(x,r,T)$ such that
for any $(x^*,r^*,T^*)$ from that neighborhood, 
there exists ${T^*}'\in SO(n)$ such that
$\dist(T^*,{T^*}')<\eps$ and $x^*+r^*{T^*}'(V)\su W_i$.
Hence, by the compactness of $\J \times SO(n)$,
for a given $\eps>0$, there exists an $N$ such that  we can choose an
$i\le N$ for every $(x,r,T)\in \J \times SO(n)$.

So fix $(x,r,T)\in \J \times SO(n)$ and $\eps>0$. 
Let $W$ be a $(k+1)$-dimensional affine subspace of $\R^n$ that contains $V$
such that $0<\dist(W,0)<\dist(V,0)=1$. We denote by $v$ be the  point
of $V$ closest to the origin, and let $V_0=x+rT(V)$, $v_0=x+rT(v)$ and $W_0=x+rT(W)$.
Then $S_0:=W_0\cap S(x,r)$ is a sphere
in $W_0$, and $V_0$ is the tangent of $S_0$ at the point $v_0$.
If $W_i$ is sufficiently close to $W_0$, then we can pick a point $v'_0\in S'_0:=W_i\cap S(x,r)$ close to
$v_0$, and a $k$-dimensional affine subspace $V'_0\su W_i$ close to 
$V_0$ that is the tangent of $S'_0$ at $v'_0$.
Then $V'_0$ is at distance $r$ from $x$ and it is as close to $V_0=x+rT(V)$ 
as we wish, so $V_0'=x+rT'(V)$ for some $T'\in SO(n)$
and $T'$ can be chosen arbitrarily close to $T$, 
which completes the proof of the claim of the previous paragraph.

Thus, for a given $L\in\cK''$ and $\eps>0$, if we let
$$
K=\{(x,r,T')\ : \exists i\le N,\exists\, T\text{ s.t. } 
   (x,r,T)\in L,\ \dist(T,T')\le \eps,\
   x+rT'(V)\su W_i\},
$$
then $K \in \cK''$ and the Hausdorff distance between $K$ and $L$ is at most $\eps$.
Furthermore,
$A''_{K,V}\su \bigcup_{i=1}^{N}W_i \su B$,
which completes the proof.
\end{proof}

The same statements hold if, instead of $S=V$, we consider any subset $S\subset V$. By taking a countable intersection of residual sets we obtain the following.

\begin{theorem}\label{t:generalkplus1}
Let $0\le k<n$ be integers and 
let $S=\bigcup_{i=1}^\infty S_i$, where each $S_i$ is a 
subset of a $k$-dimensional affine subspace $V_i$ with 
$0\not\in V_i$. 
Let $\J\su \R^n \times (0,\infty)$ be an arbitrary nonempty compact set.
Recall that $\cK''$ denotes the space of all compact sets 
$K\su \J \times SO(n)$ that have full projection onto $\J$.

Then
for a typical $K\in\cK''$, the set $A''_{K,S}$ defined by \eqref{A''} 
is a closed set with $\dim A''_{K,S}\le k+1$, and
for every $(x,r)\in\J$, there exists a $T\in SO(n)$ such that 
$x+rT(S)\su A''_{K,S}$.
\end{theorem}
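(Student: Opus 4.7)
The plan is to deduce this theorem from Lemma~\ref{l:rotatedeverysize} via a standard countable-intersection-of-residuals argument, exactly mirroring the way Theorem~\ref{thm} was derived from Lemma~\ref{main} in Section~2.

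First, I would apply Lemma~\ref{l:rotatedeverysize} separately to each $V_i$ to obtain, for each $i$, a residual subset $\mathcal{R}_i = \{K \in \cK'' : \dim A''_{K,V_i} \le k+1\}$ of $\cK''$. As noted in the proof of Lemma~\ref{l:rotatedeverysize} (following the pattern of Lemma~\ref{main}), $\mathcal{R}_i$ is in fact $G_\delta$: the maps $K \mapsto A''_{K,V_i} \cap [-N,N]^n$ into $\cK^n$ are continuous, and $\{L \in \cK^n : \dim L \le s\}$ is $G_\delta$. Since a countable intersection of residual sets in a complete metric space is residual, $\mathcal{R} := \bigcap_i \mathcal{R}_i$ is residual in $\cK''$. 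For any $K \in \mathcal{R}$, the inclusion $S_i \subset V_i$ gives $A''_{K,S_i} \subset A''_{K,V_i}$, so
$$
\dim A''_{K,S} \;=\; \dim \bigcup_i A''_{K,S_i} \;=\; \sup_i \dim A''_{K,S_i} \;\le\; k+1,
$$
using that the Hausdorff dimension of a countable union is the supremum of the Hausdorff dimensions of the summands.

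Second, the existence claim is automatic: by definition of $\cK''$, the projection of $K$ onto $\J$ is surjective, so for every $(x,r)\in\J$ one picks $T\in SO(n)$ with $(x,r,T)\in K$, and then $x+rT(S)\su A''_{K,S}$ by \eqref{A''}. For closedness, note that $K$ is compact and the map $(x,r,T,s)\mapsto x+rT(s)$ is continuous: a routine subsequence argument (extract a convergent subsequence of $(x_n,r_n,T_n)\in K$, then pass to the limit in $s_n$ using closedness of $S$) shows $A''_{K,S}$ is closed whenever $S$ is; in the main intended application, where $S$ is the $k$-skeleton of a polytope, $S$ is compact and so $A''_{K,S}$ is even compact as a continuous image of $K\times S$.

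The main content of the theorem — the dimension bound $k+1$ — is already carried by Lemma~\ref{l:rotatedeverysize}, so the present argument is essentially bookkeeping and I do not anticipate a serious obstacle. The only point requiring a little care is verifying the $G_\delta$ structure that makes the countable-intersection step preserve residuality, but this is handled verbatim as in the proof of Lemma~\ref{main}.
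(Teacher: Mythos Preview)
Your argument is correct and matches the paper's own derivation, which simply says ``By taking a countable intersection of residual sets we obtain the following'' after Lemma~\ref{l:rotatedeverysize}. You in fact supply more detail than the paper does, and you are right to flag that the closedness of $A''_{K,S}$ requires $S$ itself to be closed (a hypothesis the paper's statement leaves implicit).
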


We can see from Fact \ref{t:atleastkplus1} that the estimate $k+1$ above is sharp, provided that $\dim S=k$
and $J\supset C\times\{r\}$ for some $r>0$ and $C\su\R^n$ with $\dim C=n$.
This gives the general version of (3) and (4) of Corollary~\ref{c:dim}.  

\begin{remark}
In Theorem~\ref{t:generalkplus1} we obtain a rotated and scaled copy of $S$
for every $(x, r) \in \J$ inside a set of Hausdorff 
dimension $k+1$. We claim that using a similar
argument as in \cite[Remark 1.6]{JJKM} we can also move $S$ \emph{continuously} 
inside a set of Hausdorff dimension $k+1$ so that during this motion we get
$S$ in every required position. 
Indeed, let $K$ be a fixed (typical) element
of $\cK''$ guranteed by Theorem~\ref{t:generalkplus1}
such that $\dim A''_{K,S}\le k+1$.
Since $K$ is a nonempty compact subset of the
metric space $\conv(\J)\times SO(n)$, 
where $\conv$ denotes the convex hull,
there exists a continuous function
$g:C_{1/3}\to \conv(\J)\times SO(n)$ 
on the classical Cantor set $C_{1/3}$
such that $g(C_{1/3})=K$.
All we need to do is to extend this map 
continuously to $[0,1]$ such that 
$\dim A''_{g([0,1]),S} \le k+1$.
For each complementary interval $(a,b)$ of the Cantor set, we define $g$ on $(a,b)$ in such a way that $g$ is smooth on $[a,b]$ and that the diameter of $g([a,b])$ is at most a constant multiple of the distance between $g(a)$ and $g(b)$. This gives the desired extension since the union of
the sets of the form $x+rT(S)$ ($(x,r,T)\in g((a,b))$) will be a countable union of smooth
$k+1$-dimensional manifolds, so 
$\dim A''_{g((a,b)),S} = k+1$.

Note that if $\J=C \times \{1\}$ then we get only congruent copies.
So in particular, for any $k<n$, the $k$-skeleton
of a unit cube can be continuously moved by rigid motions in $\Rn$ 
within a set of Hausdorff dimension $k+1$ in such
a way that the center of the cube goes through every point of $C$, 
or by joining such motions, through every point of $\Rn$.
\end{remark}

\section{Rotated copies: measure}
\label{s:Alan}

In this section, we study what happens when we place a rotated punctured hyperplane through every point. We show that typical arrangements of this kind have Lebesgue measure zero and are hence Nikodym sets. Using similar methods, we also show that typical arrangements of  placing a rotated hyperplane at every positive distance from every point have measure zero. We use $| \cdot |$ to denote the Lebesgue measure.

Let $e_1 = (1,0,\ldots,0) \in \R^n$ and $H = \{(y_1, \ldots, y_n) \in \R^n : y_1 = 0\}$. By a \emph{rotated hyperplane at distance $r \in [0, \infty)$ from $x \in \R^n$}, we mean a set of the form $x + rT(e_1) + T(H)$ for some $T \in SO(n)$. Note that we now allow $r$ to be $0$, and that $x + rT(e_1) + T(H)$ differs from $x + rT(e_1+H)$ when $r = 0$.

Fix a nonempty compact set $\J \subset \R^n \times [0, \infty)$. As in Section \ref{s:rotated}, we let $\cK''$ denote the space of compact sets $K \subset \J \times SO(n)$ that have full projection onto $\J$.

In this section we prove the following result.

\begin{theorem}
\label{theorem:typical-davies-nikodym}
For a typical $K \in \cK''$, the set 
\[
\bigcup_{(x, r, T)\in K} (x + rT(e_1) + T(H)) \setminus \{x\}
\]
has measure zero.
\end{theorem}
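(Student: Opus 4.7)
The plan is to show that $\{K\in\cK''\,:\,|B_K|=0\}$ is a dense $G_\delta$ in $\cK''$, where $B_K:=\bigcup_{(x,r,T)\in K}(x+rT(e_1)+T(H))\setminus\{x\}$. This follows the Baire-category template of Lemmas~\ref{main},~\ref{l:rotated}, and~\ref{l:rotatedeverysize}. For the $G_\delta$ structure, write $B_K=\bigcup_{m\in\N}B_K^{(1/m)}$ where
\[
B_K^{(\rho)}\ :=\ \bigcup_{(x,r,T)\in K}(x+rT(e_1)+T(H))\setminus B(x,\rho).
\]
By compactness of $K\subset \J\times SO(n)$ together with the closed conditions $\langle y-x,T(e_1)\rangle=r$ and $|y-x|\ge\rho$, each $B_K^{(\rho)}\cap\overline{B(0,N)}$ is compact, and the map $K\mapsto B_K^{(\rho)}\cap\overline{B(0,N)}$ is upper-semicontinuous in the Painlev\'e--Kuratowski sense: any limit of points in $B_{K_n}^{(\rho)}\cap\overline{B(0,N)}$ with $K_n\to K$ lies in $B_K^{(\rho)}\cap\overline{B(0,N)}$, by extracting a convergent subsequence of witnessing triples $(x_n,r_n,T_n)\in K_n$ and passing to the limit in the defining equations. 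Combined with outer regularity of Lebesgue measure, this gives that $\{K\,:\,|B_K^{(1/m)}\cap\overline{B(0,N)}|<1/k\}$ is open for every $m,N,k\in\N$, so $\{K\,:\,|B_K|=0\}$ is $G_\delta$.

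The main content is density: given $L\in\cK''$ and $\delta>0$, I would construct $K\in\cK''$ with $d_H(K,L)\le\delta$ and $|B_K|=0$. Paralleling the snapping constructions in Lemmas~\ref{l:rotated} and~\ref{l:rotatedeverysize}, the plan is to first build a Borel set $E\subset\R^n$ of Lebesgue measure zero which is \emph{rich}: for every $(x,r,T)\in L$ there is $T'\in SO(n)$ with $d(T,T')\le\delta$ such that $(x+rT'(e_1)+T'(H))\setminus\{x\}\subset E$. With such $E$ in hand, take $K$ to be a compact subset of
\[
\bigl\{(x,r,T')\,:\,\exists\,T,\,(x,r,T)\in L,\,d(T,T')\le\delta,\,(x+rT'(e_1)+T'(H))\setminus\{x\}\subset E\bigr\}
\]
with full projection onto $\J$. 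If $E$ is written as a countable union of hyperplanes, the displayed set is $F_\sigma$, and a standard selection argument extracts a compact piece with full projection (the fiber above each $(x,r)\in\J$ is nonempty by the richness of $E$). Then $d_H(K,L)\le\delta$, $K\in\cK''$, and $B_K\subset E$, so $|B_K|=0$.

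The main obstacle is the construction of such a rich measure-zero $E$. When $\J\subset\R^n\times\{0\}$, this amounts to Falconer's Nikodym construction~\cite{Fa86}, but strengthened so that the Nikodym hyperplane through each point can be chosen with any prescribed (approximate) normal direction. For general $\J$ one also needs measure-zero sets containing hyperplanes at every positive distance from every point in any prescribed direction --- the companion existence result mentioned in the abstract. Both can be produced by a Davies--Falconer Perron-tree-style compression: one inductively constructs finite unions of hyperplanes which serve progressively finer $\delta_n$-nets of admissible triples $(x,r,T)\in \J\times SO(n)$, with total Lebesgue measure tending to zero, and takes $E$ to be the countable union over all scales. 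Verifying that this produces a measure-zero Borel set which still accommodates the required density-of-directions at every $(x,r)\in\pi_{\J}(L)$ --- so that a $\delta$-perturbation of any given $T$ always lands in $E$ --- is the technical heart of the argument, and is the point where the proof must genuinely go beyond the dimension-based constructions of Sections~2--4.
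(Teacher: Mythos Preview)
Your $G_\delta$ argument via upper-semicontinuity of $K\mapsto B_K^{(\rho)}\cap\overline{B(0,N)}$ is fine. The gap is in the density step, specifically in the construction of the rich null set $E$. You describe $E$ as a countable union of hyperplanes (``finite unions of hyperplanes which serve progressively finer $\delta_n$-nets\ldots\ $E$ the countable union over all scales''). But a countable union $E=\bigcup_i V_i$ of hyperplanes cannot serve all $(x,r)\in\J$ once $\J$ has positive $(n{+}1)$-dimensional measure: for fixed $i$, the set $\{(x,\dist(x,V_i)):x\in\R^n\}$ is a Lipschitz graph and hence $(n{+}1)$-null, so $\bigcup_i\{(x,r):\dist(x,V_i)=r\}$ misses almost every $(x,r)\in\J$. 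Even in the pure Nikodym case $\J=C\times\{0\}$ with $|C|>0$, a countable union of hyperplanes passes through only a null set of centers. Serving a dense set of $(x,r,T)$ does not pass to the limit, because the condition ``there is a hyperplane in $E$ at distance exactly $r$ from $x$'' is closed, not open, in $(x,r)$; nearby points are at nearby but different distances. So the snapping you propose is not available for generic $(x,r)$.

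The paper sidesteps this by never building a global null $E$. It localizes: for each ball $B$, it shows that $\cA(B)=\{K:|B\cap B_K'|=0\}$ is residual, where $B_K'$ is the union of those hyperplanes whose ``foot'' $x+rT(e_1)$ lies outside $2B$, and then intersects over a countable family of balls. The geometric input (Lemmas~\ref{lemma:iterated-partition}--\ref{lemma:nbhd-SOn}) is a Perron-tree construction in a $2$-plane producing, for each $(x_0,r_0,T_0)$ and each $\eps$, a set $\widetilde D$ which is a finite union of translated \emph{double cones} (times $\R^{n-2}$), hence contains a continuum of hyperplanes --- enough to serve an \emph{open} neighborhood $\widetilde U$ of $(x_0,r_0)$ with directions $\eta$-close to $T_0$ --- while $|\widetilde D\cap B|<\eps$. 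Covering $L$ by finitely many such neighborhoods and patching gives $K$ close to $L$ with $|B\cap B_K'|<\eps$. The crucial difference from your sketch is that $\widetilde D$ has small measure only inside $B$, not globally, and is a union of full-measure cones rather than hyperplanes; this is exactly what allows it to serve an open set of parameters.
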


Note that if $r = 0$, then $(x + rT(e_1) + T(H)) \setminus \{x\}$ is $x + T(H\setminus\{0\})$, so we are placing a rotated copy of the \emph{punctured hyperplane} $H\setminus\{0\}$ through $x$. Thus, if we consider the case $J = C \times \{0\}$ for some compact set $C \subset \R^n$, we see that typical arrangements give rise to Nikodym sets. We also obtain our claim in Remark~\ref{Kakeya} that if we place an un-punctured hyperplane through every point in $C$, the typical arrangement of this kind has Lebesgue measure equal to $|C|$. 

By taking countable unions of sets of the form in Theorem~\ref{theorem:typical-davies-nikodym}, we obtain the following:

\begin{cor}
There is a set of measure zero in $\R^n$ which contains a hyperplane at every positive distance from every point as well as a punctured hyperplane through every point.
\end{cor}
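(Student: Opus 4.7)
I will follow the Baire-category template used elsewhere in the paper: exhibit a dense $G_\de$ subset of $\cK''$ whose elements all satisfy the conclusion. For each $N,m\in\N$, set
\[
A_K^{1/m}:=\bigl\{y\in\R^n : \exists\,(x,r,T)\in K,\ y\in x+rT(e_1)+T(H),\ |y-x|\ge 1/m\bigr\}.
\]
These closed sets increase to the set in the theorem statement as $m\to\infty$, so $|A_K|=0$ is equivalent to $|A_K^{1/m}\cap[-N,N]^n|=0$ for all $N,m\in\N$. It therefore suffices to show that for each fixed $N,m$, the set $\cG_{N,m}=\{K\in\cK'':|A_K^{1/m}\cap[-N,N]^n|=0\}$ is a dense $G_\de$ in $\cK''$; the countable intersection of these sets is then residual by the Baire category theorem.

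The $G_\de$ property follows from upper semi-continuity of $K\mapsto|A_K^{1/m}\cap[-N,N]^n|$. Note that $A_K^{1/m}\cap[-N,N]^n$ is compact: it is the $y$-projection of the closed subset $\{(x,r,T,y)\in K\times[-N,N]^n:y\in x+rT(e_1)+T(H),\ |y-x|\ge 1/m\}$ of $K\times[-N,N]^n$. If $K_n\to K$ in Hausdorff metric and $y_n\in A_{K_n}^{1/m}\cap[-N,N]^n$ with $y_n\to y$, witnessing triples $(x_n,r_n,T_n)\in K_n$ subconverge to some $(x,r,T)\in K$; continuity of the map $(x,r,T)\mapsto x+rT(e_1)+T(H)$ together with the closed condition $|y-x|\ge 1/m$ give $y\in A_K^{1/m}$. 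Reverse Fatou yields the semi-continuity, so each $\{K:|A_K^{1/m}\cap[-N,N]^n|<1/k\}$ is open and $\cG_{N,m}$ is a $G_\de$.

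The substantive step is density. Given $L\in\cK''$ and $\de,\eps>0$, I construct $K\in\cK''$ within Hausdorff distance $\de$ of $L$ with $|A_K\cap[-N,N]^n|<\eps$. The key tool is a Davies/Venetian-blinds packing lemma: there exist finitely many hyperplanes $\Pi_1,\ldots,\Pi_M$, each equipped with a distinguished center $c_i\in\R^n$ and a distance $r_i\ge 0$, such that
\[
\Bigl|\bigcup_{i=1}^M(\Pi_i\sm\{c_i\})\cap[-N,N]^n\Bigr|<\eps,
\]
and such that for every $(x,r,T)\in\J\times SO(n)$ some $T^*\in SO(n)$ with $\dist(T,T^*)<\de$ satisfies $(x+rT^*(e_1)+T^*(H))\sm\{x\}\su\bigcup_i(\Pi_i\sm\{c_i\})$. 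Granting the lemma, set $K:=L_\de\cap F$, where $L_\de\su\J\times SO(n)$ is the closed set of triples $(x,r,T^*)$ with $\dist(T,T^*)\le\de$ for some $(x,r,T)\in L$, and $F=\{(x,r,T^*):(x+rT^*(e_1)+T^*(H))\sm\{x\}\su\bigcup_i(\Pi_i\sm\{c_i\})\}$. Universality in the packing lemma ensures $K$ has full projection onto $\J$; $K$ is compact, lies within $\de$ of $L$ in Hausdorff metric, and satisfies $A_K\su\bigcup_i(\Pi_i\sm\{c_i\})$, hence $|A_K\cap[-N,N]^n|<\eps$.

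The main obstacle is the packing lemma itself. In the Nikodym subcase (triples with $r=0$) this is a direction-flexible strengthening of Falconer's 1986 construction, which produces a measure-zero Borel set containing a hyperplane through every point of a prescribed compact set; the flexibility we need follows from the iterative ``Venetian-blinds'' structure, where at each stage the tube directions can be prescribed within the current angular cell, so any target direction can be approximated within $\de$. The $r>0$ case is handled by the same iterative narrowing-tubes scheme applied to hyperplanes at distance $r$ from each center, noting that the measure bound is insensitive to translating a hyperplane away from its designated center point. The two regimes are combined by taking a union, and the resulting packing lemma feeds into the density argument above to finish the proof.
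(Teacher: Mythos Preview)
Your Baire-category framing and the $G_\de$ verification via upper semi-continuity are fine, but the density step fails because the ``packing lemma'' you invoke is false. You ask for finitely many hyperplanes $\Pi_1,\ldots,\Pi_M$ such that for every $(x,r,T)\in\J\times SO(n)$ there is $T^*$ within $\de$ of $T$ with $(x+rT^*(e_1)+T^*(H))\sm\{x\}\su\bigcup_i(\Pi_i\sm\{c_i\})$. But a (punctured) hyperplane contained in a finite union of (punctured) hyperplanes must coincide with one of them, so this forces $\dist(x,\Pi_i)=r$ for some $i$. For fixed $\Pi_i$ with unit normal $n_i$ and offset $d_i$, the locus $\{(x,r):|\langle x,n_i\rangle-d_i|=r\}$ is Lebesgue-null in $\R^n\times[0,\infty)$; hence any finite family of $\Pi_i$'s serves only a null set of pairs $(x,r)$, and once $\J$ has nonempty interior --- as it must, eventually, to reach every point and every positive distance --- your condition is impossible, regardless of $\de$. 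The same obstruction already appears in the pure Nikodym subcase $r=0$: finitely many hyperplanes pass through only a null set of points $x$, so Falconer's construction, which you cite, could not possibly output a finite union of hyperplanes.

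The paper's construction is different in kind. It does not use finitely many hyperplanes; it builds, for each ball $B$, a set $\widetilde D$ out of \emph{translated double cones} (Section~\ref{subsec:cones}, Lemma~\ref{lemma:iterated-partition}). Each cone carries a one-parameter family of hyperplanes, and the signed-distance map $\ell\mapsto\dists(p,\ell)$ sweeps out an interval as $\ell$ ranges through a cone --- this is precisely what allows one piece to handle an open set of pairs $(x,r)$. The small measure of $\widetilde D\cap B$ then comes from Besicovitch-style overlap of the translated cones, not from having few hyperplanes. Your phrase ``the same iterative narrowing-tubes scheme'' gestures toward the right object, but the lemma you actually state, and the set $F$ you build from it, are incompatible with it; supplying the cone construction together with its distance-preservation property (condition~(\ref{condition-distance-iter}) of Lemma~\ref{lemma:iterated-partition}) is the missing substance.
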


\subsection{Translating cones}
\label{subsec:cones}

In this section we introduce the main geometric construction for proving Theorem~\ref{theorem:typical-davies-nikodym}. 
This construction is done in $\R^2$ and we will later see how to apply it to the $n$-dimensional problem.
Our geometric arguments are similar to those used to construct Kakeya needle sets of arbitrarily small measure, see e.g. \cite{Be}. 

For $-\frac{\pi}{2} < \phi_1 < \phi_2 < \frac{\pi}{2}$, we define
\begin{align*}
D(\phi_1, \phi_2) 
&= 
\{ (r \sin\theta, r \cos\theta) : r \in \R, \theta \in [\phi_1, \phi_2] \}.
\end{align*}
In other words, $D(\phi_1, \phi_2) \subset \R^2$ denotes the double cone bounded by the lines through the origin of signed angles $\phi_1, \phi_2$ with respect to the $y$-axis. (Note in particular that our sign convention measures the angles in the clockwise direction.)

Our geometric construction begins by partitioning $D = D(\phi_1, \phi_2)$ into finitely many double cones $\{ D_i\}$. Next, we translate each $D_i$ downwards to a new vertex $v_i \in D_i \cap \{y_2 < 0\}$ to obtain $\widetilde D_i := v_i +  D_i$. Our goal is to choose the $\{D_i\}$ and the $\{v_i\}$ so that the resulting double cones $\{ \widetilde D_i\}$ satisfy the following three properties. 

First, the $\{\widetilde D_i\}$ should have considerable overlap (and hence small measure) in a strip below the $x$-axis.

Second, we would like our construction to preserve certain distances to lines. To be more precise, first let
\[
D^\perp(\phi_1, \phi_2)
=
\{ (r \sin\theta, r \cos\theta) : r \geq 0, \theta \in  [\phi_2 - \tfrac{\pi}{2}, \phi_1 + \tfrac{\pi}{2}] \}
.
\]
Our second desired property is that for any point $p \in D^\perp(\phi_1, \phi_2)$ and any line $\ell \subset D$, there is a line in some $\widetilde D_i$ which has the same distance to $p$ as $\ell$ does.

For a non-horizontal line $\ell\subset \R^2$ and $p \in \R^2$, we define $\dists(p, \ell)$ to be the signed distance from $p$ to $\ell$. The sign is positive if $p$ is on the left of $\ell$, and negative if $p$ is on the right. In our construction, we will always consider only lines whose direction belongs to the original cone $D(\phi_1,\phi_2)$. In particular, they are never horizontal so the signed distance is defined. The essential property of $D^\perp(\phi_1, \phi_2)$ is that for any $p \in D^\perp(\phi_1, \phi_2)$, the map $\ell \mapsto \dists(p, \ell)$ is an increasing function as $\ell$ rotates from one boundary line $\ell_1$ of $D$ to the other boundary line $\ell_2$. Hence,
\[
\{\dists(p, \ell) : \ell \subset D\}
=
[\dists(p, \ell_1), \dists(p, \ell_2)].
\]

Before stating the third and final property, we observe that since  $v_i \in D_i \cap \{y_2 < 0\}$ for all $i$, we have $D \cap \{y_2 \geq 0\} \subset (\bigcup_{i} \widetilde D_i) \cap \{y_2 \geq 0\}$. The third desired property is that the reverse containment holds if we thicken $D$ slightly. That is, $(\bigcup_{i} \widetilde D_i) \cap \{y_2 \geq 0\}$ should be contained in a small neighborhood of $D \cap \{y_2 \geq 0\}$.

The following lemma asserts that it is indeed possible to partition $D$ and translate the pieces to achieve the three desired properties above. %

\begin{lemma}%
\label{lemma:iterated-partition}
Let $-\frac{\pi}{2} < \phi_1 < \phi_2 < \frac{\pi}{2}$,  $D = D(\phi_1, \phi_2)$, $R > 0$, and $\eps > 0$. Then we can choose the partition $D = \bigcup D_i$ and the translates $\widetilde D_i = v_i + D_i$ so that
\begin{enumerate}
\item\label{condition-small-iter}
$|(\bigcup_i \widetilde D_i) \cap \{ -R \leq y_2 \leq 0 \}| < \eps$.
\item\label{condition-distance-iter}
If $p \in D^\perp$ and $\ell_0 \subset D$ is a line, then there is a line $\widetilde \ell$ in some $\widetilde D_i$ such that $\dists(p, \widetilde \ell) = \dists(p, \ell_0)$.
\item\label{condition-nbhd-iter}
$(\bigcup_i \widetilde D_i) \cap \{y_2 \geq 0\}$ is contained in the $\eps$-neighborhood of $D \cap \{ y_2 \geq 0\}$.
\end{enumerate}
\end{lemma}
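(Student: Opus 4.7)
The plan is to adapt the classical Besicovitch--Perron tree sprouting construction to this double-cone setting, building the (slightly overlapping) cover $\{D_i\}$ and the translates $\{v_i\}$ via iterative bisection. To preserve property (2), I would allow the $D_i$ to have slightly overlapping angular ranges, giving the distance intervals $\{\dists(p,\ell) : \ell\subset D_i\}$ some slack to absorb the shifts caused by translation.

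The basic sprouting step: bisect $D$ along its median (with a small angular overlap near $\psi = (\phi_1+\phi_2)/2$) into sub-cones $D_1, D_2$, and translate each by a vector $v_i \in D_i \cap \{y_2 < 0\}$ chosen so that $\widetilde{D}_1$ and $\widetilde{D}_2$ overlap substantially in the strip $\{-R \le y_2 \le 0\}$. A direct Perron-style area calculation should show that, for suitable $v_1, v_2$, the area of $\widetilde{D}_1 \cup \widetilde{D}_2$ in the strip is at most $c$ times $|D \cap \{-R \le y_2 \le 0\}|$ for some fixed $c<1$. Iterating the sprouting inside each $\widetilde{D}_i$, after $m$ levels I would have $2^m$ sub-cones whose strip-union has area at most $c^m\, |D \cap \{-R \le y_2 \le 0\}|$, giving property (1) for $m$ large.

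Property (3) is ensured by taking the per-level translation magnitudes to decay geometrically, so each final $\widetilde{D}_i$ is displaced from its nested position in $D$ by at most $O(\eps)$; hence $(\bigcup \widetilde{D}_i) \cap \{y_2 \ge 0\}$ stays in the $\eps$-neighborhood of $D \cap \{y_2 \ge 0\}$. Property (2) follows because, by the monotonicity of $\ell \mapsto \dists(p,\ell)$ on $D^\perp$, the image set $\{\dists(p, \widetilde\ell) : \widetilde\ell \subset \widetilde{D}_i\}$ is a closed interval whose endpoints are shifted from those of $\{\dists(p,\ell) : \ell \subset D_i\}$ by the quantities $\dists(v_i,\ell)$, each bounded by $|v_i|$; as long as the angular overlap between adjacent $D_i$ and $D_{i+1}$ produces a distance-interval overlap exceeding these shifts, the union of the $\widetilde{D}_i$'s distance intervals still covers the full range $[\dists(p, \ell_1), \dists(p, \ell_2)]$.

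The main obstacle is balancing these three constraints through the iteration: the translations must be large enough to achieve strip overlap (for (1)) but small enough that the accumulated shifts fit within the angular-overlap margins (for (2)) and the total displacement stays below $\eps$ (for (3)). The core quantitative step is a Perron-style estimate showing that a uniform reduction factor $c < 1$ can be achieved with translations of arbitrarily small magnitude, so that (1), (2), and (3) can all be reconciled with a sufficient number of iterations.
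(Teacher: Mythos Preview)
Your approach diverges from the paper's in two essential ways, and one of them hides a real gap.

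\textbf{Different iteration scheme.} The paper does not use a Perron-tree bisection with a multiplicative contraction factor. Instead it proves a ``basic step'' (their Lemma~5.4): partition $D$ finely and translate every piece straight down by the \emph{same} small amount $\delta$, so all new vertices lie on $\{y_2=-\delta\}$. The area of the union in the strip $\{-\delta\le y_2\le 0\}$ is then at most $c\delta^2$ (quadratic in $\delta$). Iterating this $N$ times with $\delta=R/N$ pushes all vertices to $\{y_2=-R\}$ while the total strip area is at most $N\cdot c'\delta^2=c'R^2/N$, which is made small by taking $N$ large. Property~(3) is secured at each step simply by refining the partition, not by keeping the translations small; the cumulative vertical displacement is exactly $R$, not $O(\eps)$.

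\textbf{The gap in your treatment of property~(2).} Your plan is to give adjacent $D_i$ a fixed angular overlap so that the distance intervals $\{\dists(p,\ell):\ell\subset D_i\}$ still cover after translation. But for $p\in D^\perp$ that interval has length of order $|p|$ times the angular width of $D_i$, whereas the shift incurred by translating by $v_i$ is of order $|v_i|$, independent of $p$. Since $D^\perp$ contains points arbitrarily close to the origin, no fixed angular overlap can absorb the shift uniformly in $p$; the balancing you describe cannot be carried out. The paper avoids this entirely with a pointwise monotonicity trick: it chooses $v_i\in D_i\cap(-D_i^\perp)\cap\{y_2=-\delta\}$, and for such $v_i$ one checks directly that
\[
[\dists(p,\ell_1),\dists(p,\ell_2)]\subset[\dists(p,v_i+\ell_1),\dists(p,v_i+\ell_2)]
\]
for \emph{every} $p\in D^\perp$, where $\ell_1,\ell_2$ are the boundary lines of $D_i$. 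Thus the distance interval \emph{expands} under the translation, so (2) holds with no overlap needed, and moreover $D^\perp\subset\widetilde D_i^\perp$, which is exactly what allows the argument to iterate. This choice of $v_i$ is the missing idea in your proposal.
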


To prove this lemma, we  first need a more elementary construction, in which we translate each $D_i$ downwards by only a small amount $\delta$.

\begin{lemma}%
\label{lemma:basic-partition}
Let $-\frac{\pi}{2} < \phi_1 < \phi_2 < \frac{\pi}{2}$, $D = D(\phi_1, \phi_2)$, $\delta > 0$, and $\eps > 0$. Then we can choose the partition $D = \bigcup D_i$ and the translates $\widetilde D_i = v_i + D_i$ so that
\begin{enumerate}
\item\label{condition-small-basic}
$|(\bigcup_i \widetilde D_i) \cap \{ -\delta \leq y_2 \leq 0 \}| \leq c \delta^2$, where $c = |D \cap \{0 \leq y_2 \leq 1\}|$.
\item\label{condition-distance-basic}
If $p \in D^\perp$ and $\ell_0 \subset D$ is a line, then there is a line $\widetilde \ell$ in some $\widetilde D_i$ such that $\dists(p, \widetilde \ell) = \dists(p, \ell_0)$.
\item\label{condition-vertex-basic}
For each $i$, $v_i \in \{ y_2 = -\delta \}$.
\item\label{condition-contain-basic}
For each $i$, $D^\perp \subset \widetilde D_i^\perp$.
\item\label{condition-nbhd-basic}
$(\bigcup_i \widetilde D_i) \cap \{y_2 \geq 0\}$ is contained in the $\eps$-neighborhood of $D \cap \{ y_2 \geq 0\}$.
\end{enumerate}
(If $D_i = D(\psi_1, \psi_2)$, then $D_i^\perp := D^\perp(\psi_1, \psi_2)$ and $\widetilde D_i^\perp := v_i + D_i^\perp$.)
\end{lemma}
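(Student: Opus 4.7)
The plan is to partition $D$ into $N$ congruent angular sub-cones and translate each one downward by $\delta$ so that its left boundary ray lies along a line of $D$. Explicitly, equipartition $[\phi_1,\phi_2]$ into sub-intervals by $\phi_1=\psi_0<\cdots<\psi_N=\phi_2$, set $D_i=D(\psi_{i-1},\psi_i)$, and let
\[
v_i := \bigl(-\delta\tan\psi_{i-1},\,-\delta\bigr)
\]
be the intersection of $\{y_2=-\delta\}$ with the line $\ell_{\psi_{i-1}}$ through $0$ in direction $\psi_{i-1}$. The integer $N$ will be fixed at the end, large enough that $\delta(\tan\psi_i-\tan\psi_{i-1})<\eps$ for every $i$ (and in particular $\psi_i-\psi_{i-1}<\pi/2$).

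Three of the conditions become nearly automatic. Condition~(\ref{condition-vertex-basic}) is built in. For (\ref{condition-small-basic}), $\widetilde D_i\cap\{-\delta\le y_2\le 0\}$ is a translate of the triangle $D_i\cap\{0\le y_2\le \delta\}$, which has area $\delta^2\,|D_i\cap\{0\le y_2\le 1\}|$, and since the $D_i$ partition $D$ the areas sum to $c\delta^2$, an upper bound for the measure of the union. For (\ref{condition-contain-basic}), $-v_i=(\delta\tan\psi_{i-1},\delta)$ has direction $\psi_{i-1}$, which lies in the angular range $[\psi_i-\pi/2,\psi_{i-1}+\pi/2]$ of $D_i^\perp$, so $-v_i\in D_i^\perp$; since $D_i^\perp$ is a convex cone from $0$ (its angular range is strictly less than $\pi$) and contains $D^\perp$, every $q\in D^\perp$ satisfies $q-v_i\in D_i^\perp$, i.e., $D^\perp\subset \widetilde D_i^\perp$.

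The main content is condition~(\ref{condition-distance-basic}). Fix $p\in D^\perp$ and a line $\ell_0\subset D$; since $D$ is a double cone with vertex $0$, the line $\ell_0$ must pass through $0$ with some direction $\theta\in[\phi_1,\phi_2]$. Choose $i$ with $\theta\in[\psi_{i-1},\psi_i]$, and let $\widetilde\ell^\pm$ denote the lines through $v_i$ in directions $\psi_{i-1}$ and $\psi_i$. By our alignment $\widetilde\ell^-=\ell_{\psi_{i-1}}$, so $\dists(p,\widetilde\ell^-)=\dists(p,\ell_{\psi_{i-1}})$, while a direct computation gives $\dists(p,\widetilde\ell^+)=\dists(p,\ell_{\psi_i})+\delta\sin(\psi_i-\psi_{i-1})/\cos\psi_{i-1}\ge\dists(p,\ell_{\psi_i})$. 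By (\ref{condition-contain-basic}) and the monotonicity of $\dists(p,\cdot)$ under rotation (applied with $v_i$, $D_i$ in place of $0$, $D$), as $\widetilde\ell$ rotates through $v_i$ from direction $\psi_{i-1}$ to $\psi_i$ the map $\widetilde\ell\mapsto\dists(p,\widetilde\ell)$ is monotonic, so its range is the interval $[\dists(p,\widetilde\ell^-),\dists(p,\widetilde\ell^+)]\supset[\dists(p,\ell_{\psi_{i-1}}),\dists(p,\ell_{\psi_i})]\ni\dists(p,\ell_0)$.

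For condition~(\ref{condition-nbhd-basic}), $\widetilde D_i\cap\{y_2\ge 0\}$ is an upward triangle whose left edge sits on $\ell_{\psi_{i-1}}\subset D$, and a height-by-height estimate shows that its cross-section at every $h\ge 0$ lies within horizontal distance $\delta(\tan\psi_i-\tan\psi_{i-1})$ of $D\cap\{y_2=h\}$; the choice of $N$ makes this $<\eps$. The main subtle point in the whole construction is ensuring that the distance intervals of consecutive $\widetilde D_i$ join up without gaps in (\ref{condition-distance-basic}) — this is precisely what the alignment $v_i\in\ell_{\psi_{i-1}}$ arranges, since then the right endpoint of $\widetilde D_i$'s interval is $\dists(p,\widetilde\ell^+)\ge\dists(p,\ell_{\psi_i})$, which is exactly the left endpoint of $\widetilde D_{i+1}$'s interval.
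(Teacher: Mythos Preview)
Your proof is correct and follows essentially the same approach as the paper's: partition $D$ into thin sub-cones $D_i$, translate each to a vertex $v_i\in\{y_2=-\delta\}\cap D_i\cap(-D_i^\perp)$, derive (\ref{condition-small-basic}) from the area sum, (\ref{condition-contain-basic}) from $-v_i\in D_i^\perp$, (\ref{condition-distance-basic}) from the monotonicity of $\dists(p,\cdot)$ on $D_i^\perp$ and $\widetilde D_i^\perp$, and (\ref{condition-nbhd-basic}) by making the partition fine. The only difference is that you commit to the specific boundary choice $v_i\in\ell_{\psi_{i-1}}$ and verify (\ref{condition-distance-basic}) by an explicit computation of $\dists(p,\widetilde\ell^+)$, whereas the paper allows any $v_i$ in the stated set and deduces the interval containment more abstractly from $-v_i\in D_i\cap\{y_2\ge 0\}$; your final paragraph about ``intervals joining up'' is not actually needed, since your own argument already handles each $\ell_0$ within a single $\widetilde D_i$.
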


\begin{proof}
We claim that for any partition $D = \bigcup_i D_i$, if we choose any $v_i \in \{y_2 = -\delta\} \cap D_i \cap (-D_i^\perp)$, then we have \eqref{condition-small-basic}, \eqref{condition-distance-basic}, \eqref{condition-vertex-basic}, and \eqref{condition-contain-basic}. Indeed, \eqref{condition-vertex-basic} is immediate. Since $-v_i \in D_i^\perp$, we have $D^\perp \subset D_i^\perp \subset \widetilde D_i^\perp$, so \eqref{condition-contain-basic} holds. And \eqref{condition-vertex-basic} implies \eqref{condition-small-basic} since 
$
|(\bigcup_i \widetilde D_i) \cap \{-\delta \leq y_2 \leq 0\}|
\leq
\sum_i |\widetilde D_i \cap \{-\delta \leq y_2 \leq 0\}|
=
\sum_i |D_i \cap \{0\leq y_2 \leq \delta\}|
=
c\delta^2.$

To show \eqref{condition-distance-basic} holds, let $p \in D^\perp$ and $\ell_0 \subset D$. Then $\ell_0$ is in some $D_i$. Let $\ell_1, \ell_2$ be the two boundary lines of $D_i$ with $\dists(p, \ell_1) < \dists(p, \ell_2)$. Recall that $\ell \subset D_i$ if and only if $v_i + \ell \subset \widetilde D_i$. Since $p \in D_i^\perp$ and $p \in \widetilde D_i^\perp$, we have 
$\{\dists(p, \ell) : \ell \subset D_i\}
=
[\dists(p, \ell_1), \dists(p, \ell_2)]$
and
$\{\dists(p, \ell) : \ell \subset \widetilde D_i\}
=
[\dists(p, v_i+\ell_1), \dists(p, v_i+\ell_2)]$.
Since $-v_i \in D_i \cap \{y_2 \geq 0\}$, we have
$[\dists(p, \ell_1), \dists(p, \ell_2)] \subset [\dists(p, v_i+\ell_1), \dists(p, v_i + \ell_2)]$. Thus,
\[
\dists(p, \ell_0) 
\in
\{\dists(p, \ell) : \ell \subset D_i\}
\subset
\{\dists(p, \ell) : \ell \subset \widetilde D_i\}
,
\]
so there is some $\widetilde \ell \subset \widetilde D_i$ such that $\dists(p, \widetilde \ell) = \dists(p, \ell_0)$, which completes the proof of \eqref{condition-distance-basic} and hence our claim. Finally, by making the partition $\bigcup_i D_i$ sufficiently fine and choosing $v_i$ as above, we can ensure that \eqref{condition-nbhd-basic} holds.
\end{proof}

\begin{proof}[Proof of Lemma~\ref{lemma:iterated-partition}]
We fix a large $N$ and repeatedly apply Lemma~\ref{lemma:basic-partition} with $\delta = R/N$ until the vertex of each double cone lies in $\{ y_2 = -R\}$. That is, we apply Lemma~\ref{lemma:basic-partition} once on $D$ to get $E_1$, a union of double cones with vertices in $\{ y_2 = -\delta\}$ and such that $|E_1 \cap \{-\delta \leq y_2 \leq 0 \}| < c' \delta^2$, where $c' = 2|D \cap \{0 \leq y_2 \leq 1 \}|$. Next, we apply Lemma~\ref{lemma:basic-partition} to every double cone in $E_1$ to get $E_2$, a union of double cones with vertices in $\{ y_2 = -2\delta\}$ and such that $|E_2 \cap \{-2\delta \leq y_2 \leq -\delta \}| < c' \delta^2$. By Lemma~\ref{lemma:basic-partition}\eqref{condition-nbhd-basic}, we can also ensure that $|E_2 \cap \{-\delta \leq y_2 \leq 0 \}| < c' \delta^2$.  

We continue in this way to obtain $E_1, \ldots, E_N$, such that $|E_k \cap \{-j\delta \leq y_2 \leq -(j-1)\delta \}| < c' \delta^2$ for $1 \leq j \leq k \leq N$. Because of Lemma~\ref{lemma:basic-partition}\eqref{condition-nbhd-basic}, we can also ensure that $E_k \cap \{y_2 \geq 0\}$ is in the $\eps$-neighborhood of $D \cap \{y_2 \geq 0\}$ for each $k$. Ultimately, we have $|E_N \cap \{-R \leq y_2 \leq 0 \}| \leq N c' \delta^2 = c'R^2/N$. By choosing $N$ sufficiently large, we can make this quantity as small as we wish. Writing $E_N$ as $\bigcup_i \widetilde D_i$, we obtain \eqref{condition-small-iter} and \eqref{condition-nbhd-iter}. Furthermore, every time we translate downwards by $\delta$, Lemma~\ref{lemma:basic-partition}\eqref{condition-contain-basic} allows us to apply Lemma~\ref{lemma:basic-partition}\eqref{condition-distance-basic}. Thus, \eqref{condition-distance-iter} holds.
\end{proof}

\subsection{Proof of Theorem~\ref{theorem:typical-davies-nikodym}}

First we apply our main geometric construction from the previous section to prove the following lemma.

\begin{lemma}
\label{lemma:nbhd-SOn}
Let $n\ge 2$, $(x_0,r_0,T_0) \in \R^n \times [0, \infty) \times SO(n)$, let $B \subset \R^n$ be a closed ball, and suppose that $x_0 + r_0T_0(e_1) \not\in B$. 
Let $\eta > 0$ be arbitrary. Then there is a (relatively)  open neighborhood $\widetilde U$ of $(x_0,r_0)$ in $\R^n \times [0, \infty)$ such that for each $\eps > 0$, there is a set $\widetilde D \subset \R^n$ such that:
\begin{enumerate}
\item For all $(x,r) \in \widetilde U$, there is an affine hyperplane $V \subset \widetilde D$ of distance $r$ from $x$ and such that the angle between $V$ and $T_0(H)$ is at most $\eta$.
\item $|B \cap \widetilde D| < \eps$.
\end{enumerate}
\end{lemma}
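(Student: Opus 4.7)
Rotating $\R^n$ by $T_0^{-1}$, we may assume $T_0 = I$; set $q := x_0 + r_0 e_1 \notin B$, let $c,\rho$ be the centre and radius of $B$, and decompose $v := c - q = v_1 e_1 + v_H$ with $v_H \in H$. The key reduction is to restrict attention to hyperplanes whose unit normals lie in a single 2-plane $P$ containing $e_1$, so that the problem collapses onto the planar cone construction of Lemma~\ref{lemma:iterated-partition}. Choose a unit $w \in H$ with $v \cdot w = -|v_H|$ (any $w \in H$ if $v_H = 0$), and set $P := \Span\{e_1, w\}$. Any hyperplane $V$ with unit normal $u_\theta := \cos\theta\,e_1 + \sin\theta\,w \in P$ is a cylinder $V = \ell + P^\perp$ over a line $\ell \subset P$, and $\dist(x, V) = \dist(\pi_P(x), \ell)$ for every $x \in \R^n$. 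Therefore, if we construct a planar set $E \subset P$ containing the right lines, then $\widetilde D := E + P^\perp \subset \R^n$ satisfies $|\widetilde D \cap B| \le \omega_{n-2}\rho^{n-2}|E \cap \pi_P(B)|$ by Fubini (where $\omega_{n-2}$ is the volume of the unit $(n-2)$-ball).

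\textbf{2D setup and application of the cone lemma.} Identify $P \cong \R^2$ via $\phi(\alpha e_1 + \beta w) := (\alpha, \beta)$, and translate 2D coordinates by $\vec s := (x_0 \cdot e_1 + r_0,\ x_0 \cdot w - h)$ for a parameter $h > 0$ to be fixed. In the shifted coordinates, the reference line $\phi(\pi_P(q + \R w))$ becomes the $y$-axis, $\tilde x_0 := \phi(\pi_P(x_0)) - \vec s = (-r_0, h)$, and $\pi_P(B) - \vec s$ is a disk of radius $\rho$ centred at $(v_1, -|v_H| + h)$. Take $h$ strictly between $r_0\tan\eta$ and $|v_H| - \rho$, so that (a) $\tilde x_0$ lies in the interior of $D^\perp(-\eta, \eta)$, and (b) the projected disk lies in the strip $\{-R \le y_2 \le 0\}$ with $R := |v_H| - h + \rho$. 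Given $\eps > 0$, apply Lemma~\ref{lemma:iterated-partition} with $D = D(-\eta,\eta)$, this $R$, and $\eps' := \eps/(\omega_{n-2}\rho^{n-2})$ to obtain $E = \bigcup_i \widetilde D_i$ with $|E \cap \{-R \le y_2 \le 0\}| < \eps'$ and the distance-preservation property. Pulling back via $\phi$ and $\vec s$ and lifting by $P^\perp$ yields $\widetilde D$ with $|\widetilde D \cap B| < \eps$, establishing (2).

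\textbf{Neighbourhood $\widetilde U$ and conclusion of (1).} Take $\widetilde U$ to be any open neighbourhood of $(x_0, r_0)$ small enough that for every $(x, r) \in \widetilde U$, the shifted planar point $\tilde x := \phi(\pi_P(x)) - \vec s$ remains in $D^\perp$ and $r$ lies in the range of distances from $\tilde x$ to lines $\ell \subset D$ through the origin; both conditions are open and hold with strict margin at $(x_0, r_0)$ by the choice of $h$, so such a $\widetilde U$ exists independently of $\eps$. For any $(x, r) \in \widetilde U$, pick $\ell' \subset D$ with $\dist(\tilde x, \ell') = r$; Lemma~\ref{lemma:iterated-partition}(ii) then supplies $\widetilde \ell \subset E$ with $\dists(\tilde x, \widetilde \ell) = \dists(\tilde x, \ell')$, so $\dist(\tilde x, \widetilde \ell) = r$. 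Lifting via $\widetilde \ell \mapsto \phi^{-1}(\widetilde \ell + \vec s) + P^\perp$ yields a hyperplane $V \subset \widetilde D$ at distance $r$ from $x$ whose normal $u_\theta$ makes angle $\le \eta$ with $e_1$, hence with $T_0(H) = H$.

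\textbf{Main obstacle.} The delicate point is that the choice of $h$ requires the interval $(r_0\tan\eta, |v_H| - \rho)$ to be nonempty, i.e., $|v_H| > \rho + r_0\tan\eta$. In the complementary regime $|v_1| > \rho$, one can replace the cone construction by a thin perpendicular slab $\{y : |y_1 - (x_{0,1} + r_0)| < \delta\}$ for small $\delta$, which automatically avoids $B$ and satisfies both (1) and (2) trivially; the borderline case where $|v_H|$ and $|v_1|$ are both near $\rho$ is handled by tilting the primary normal slightly from $e_1$ to $u = \cos\alpha\,e_1 + \sin\alpha\,e_2$ for a suitable $e_2 \in H$ and $\alpha \in (0, \eta)$, which introduces an additional contribution $\approx(v_1 + r_0)\sin\alpha$ to the vertical separation in 2D and restores a nonempty interval for $h$. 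The hypothesis $q \notin B$ (i.e., $|v| > \rho$) is precisely what guarantees that one of these strategies always succeeds.
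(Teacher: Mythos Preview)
Your overall strategy—restrict to hyperplanes whose normals lie in a fixed $2$-plane $P \ni e_1$, reduce to a planar problem, and invoke Lemma~\ref{lemma:iterated-partition}—is exactly the paper's approach. The difficulty lies in the case analysis at the end.

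You insist that the projected disk $\pi_P(B)$ lie entirely in $\{y_2 \le 0\}$, so that part~\eqref{condition-small-iter} of Lemma~\ref{lemma:iterated-partition} alone controls $|E \cap \pi_P(B)|$. That forces $h < |v_H| - \rho$, and together with $h > r_0\tan\eta$ produces your ``main obstacle''. The tilting remedy you sketch does not close the gap: a tilt by angle $\alpha < \eta$ perturbs the components of $v$ by only $O(\alpha)$, and there is no reason this suffices. Concretely, take $r_0 = \rho$, $v_1 = 0.9\rho$, $|v_H| = 0.5\rho$ (so $|v|^2 = 1.06\rho^2 > \rho^2$): here $|v_1| < \rho$ and $|v_H| < \rho$, and for small $\eta$ no tilt of size $<\eta$ pushes either above $\rho$, so neither the slab alternative nor the cone alternative applies. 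The final sentence (``$q\notin B$ is precisely what guarantees that one of these strategies always succeeds'') is therefore not justified.

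The paper sidesteps this entirely by also using part~\eqref{condition-nbhd-iter} of Lemma~\ref{lemma:iterated-partition}, which you never invoke. Rather than pushing $B$ below the $x$-axis, it only arranges (by translation, and a reflection fixing $H$ if necessary) that $q$ lie on the positive $y$-axis and that $B$ miss the ray $\{y_1 = 0,\ y_2 \ge 0\}$; this is always possible from $q \notin B$ alone. The cone opening $\phi \in (0,\eta)$ is then shrunk until $B \cap D(-\phi,\phi) \cap \{y_2 \ge 0\} = \emptyset$. Part~\eqref{condition-nbhd-iter} keeps $\widetilde D \cap \{y_2 \ge 0\}$ inside a small neighbourhood of $D \cap \{y_2 \ge 0\}$, hence still disjoint from $B$, while part~\eqref{condition-small-iter} handles the strip $\{-2\diam B \le y_2 \le 0\}$. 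No case split is needed.
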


\begin{proof}

First we show the lemma for $n=2$. Since $x_0 + r_0T_0(e_1) \not\in B$, without loss of generality, we may  assume that $T_0 \in SO(2)$ is the identity, that $x_0+r_0e_1 \in \{y_1 = 0, y_2 > 0\}$, and that $B$ does not intersect $\{y_1 = 0, y_2 \ge 0\}$. We can also assume that $B$ lies in $\{y_2 \geq - 2\diam B\}$. %
It follows that $x_0$ lies in the upper half-plane $\{y_2 > 0\}$.

Using the notation from Section \ref{subsec:cones}, let $D = D(-\phi, \phi)$ be a double cone, 
where $\phi \in (0,\eta)$ is small enough so that $x_0 \in D^\perp$ and $B \cap D \cap \{y_2 \geq 0\} = \emptyset$.
The boundary of $D$ is made up of two lines, $\ell_1, \ell_2$, with $\dists(x_0, \ell_1) < r_0 < \dists(x_0, \ell_2)$. Let $\rho > 0$ be sufficiently small so that $\dists(x_0, \ell_1) < r_0 - \rho$ and $r_0 + \rho < \dists(x_0, \ell_2)$. 
Let $\widetilde U$ be a (relatively) open neighborhood of $(x_0, r_0)$ contained in
\[
\{ y \in D^\perp : \dists(y, \ell_1) < r_0 - \rho \text{ and } r_0 + \rho < \dists(y, \ell_2)\} \times (r_0-\rho, r_0+\rho)
.\]
Then for any $(x, r) \in \widetilde U$, there is a line $\ell \subset D$ of signed distance $r$ from $x$. Given $\eps > 0$, we apply Lemma~\ref{lemma:iterated-partition} to get $\widetilde D := \bigcup_i \widetilde D_i$ with $|\widetilde D \cap \{ -2\diam B \leq y_2 \leq 0 \}| < \eps$ and $B \cap \widetilde D \cap \{y_2 \geq 0\} = \emptyset$. It follows that $|B \cap \widetilde D| < \eps$. By Lemma~\ref{lemma:iterated-partition}\eqref{condition-distance-iter}, 
for every $(x, r) \in \widetilde U$, there is some line $\ell \subset \widetilde D$ of distance $r$ from $x$. Every line $\ell \subset \widetilde D$ is a translate of some line in $D$, so the angle between $\ell$ and $H$ is at most $\phi < \eta$. This completes the proof in dimension $n=2$.

For an arbitrary $n\ge 2$, we can assume without loss of generality that $T_0$ is the identity, that $x_0$ (and hence also $x_0+r_0 e_1$) is contained in the two-dimensional plane $\R^2\subset\R^n$ defined by the first two coordinate axes, and that the same assumptions hold as in the first paragraph of our proof. Then, if we project the ball $B$ into $\R^2$, take the sets $\widetilde U,\widetilde D\subset \R^2$ constructed above, and multiply them by $\R^{n-2}$, the resulting sets satisfy the requirements of the statement of Lemma~\ref{lemma:nbhd-SOn} with $\eps$ replaced by $\eps\diam(B)^{n-2}$.
\end{proof}

\begin{lemma}
\label{lemma:nbhd-of-x-r}
Let $(x_0,r_0,T_0) \in \R^n \times [0, \infty) \times SO(n)$, and let $B \subset \R^n$ be a closed ball. %
Let $G$ be a (relatively) open neighborhood of $(x_0,r_0,T_0)$ in $\R^n \times [0, \infty) \times SO(n)$. Then there is an open neighborhood $U \subset \R^n \times [0, \infty)$ of $(x_0,r_0)$ such that for each $\eps > 0$, there is a compact set $K \subset G$ with full projection onto $U$ and such that 
\begin{equation}
\label{eq:small-intersection-B}
B \cap \bigcup_{\substack{(x, r, T) \in K\\ x+ rT(e_1) \not\in 2B}}(x+rT(e_1) + T(H))
\end{equation}
has measure less than $\eps$. (Here $2B$ denotes the closed ball with the same center as $B$ and with twice the radius.)
\end{lemma}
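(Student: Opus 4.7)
The plan is to split into two cases depending on the position of $p_0 := x_0 + r_0 T_0(e_1)$ relative to $2B$.

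In \textbf{Case 1}, $p_0 \in \mathrm{int}(2B)$. Continuity of $(x,r,T) \mapsto x+rT(e_1)$ gives open neighborhoods $\widetilde U$ of $(x_0, r_0)$ and $\widetilde V$ of $T_0$ with $\widetilde U \times \widetilde V \subset G$ on which $x+rT(e_1)$ stays in $2B$. Taking $U$ with $\overline U \subset \widetilde U$, any $T^* \in \widetilde V$, and $K := \overline U \times \{T^*\}$ yields a compact subset of $G$ with full projection onto $U$; moreover the indexing condition $x + rT(e_1) \notin 2B$ in \eqref{eq:small-intersection-B} is never satisfied, so the union is empty (and has measure $0$) for every $\eps$.

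In \textbf{Case 2}, $p_0 \notin \mathrm{int}(2B)$. Writing $B = B(c, R)$, we have $|p_0 - c| \geq 2R$, so the auxiliary ball $B' := B(c, 3R/2)$ satisfies $B \subset B'$ and $p_0 \notin B'$. I fix $\eta > 0$ small enough that every unit vector $\nu$ within angle $\eta$ of $T_0(e_1)$ equals $T(e_1)$ for some $T$ close enough to $T_0$ that $(x,r,T)$ stays in $G$ throughout some neighborhood of $(x_0, r_0)$. Applying Lemma~\ref{lemma:nbhd-SOn} to $(x_0, r_0, T_0)$ with ball $B'$ and parameter $\eta$ yields a relatively open neighborhood $\widetilde U$ of $(x_0, r_0)$; then for each $\eps > 0$ the lemma supplies a closed set $\widetilde D \subset \R^n$ with $|B' \cap \widetilde D| < \eps$ that contains, for each $(x, r) \in \widetilde U$, an affine hyperplane of distance $r$ from $x$ whose normal lies within angle $\eta$ of $T_0(e_1)$. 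I take $U$ open with $\overline U \subset \widetilde U$, fix $G_0$ an open precompact neighborhood of $(x_0, r_0, T_0)$ with $\overline{G_0} \subset G$, and set
\[
K := \{(x,r,T) \in \overline U \times \overline{G_0} : \angle(T(e_1), T_0(e_1)) \leq \eta \text{ and } x + rT(e_1) + T(H) \subset \widetilde D\}.
\]
Closedness of $\widetilde D$ and compactness of $SO(n)$ make $K$ compact, and $K \subset \overline{G_0} \subset G$ by construction. Full projection onto $U$ follows by combining the hyperplanes from the lemma with the choice of $\eta$. Since every hyperplane arising from $(x,r,T) \in K$ lies in $\widetilde D$, the measure of the intersection in \eqref{eq:small-intersection-B} is at most $|B \cap \widetilde D| \leq |B' \cap \widetilde D| < \eps$.

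\textbf{Main obstacle.} The boundary case $p_0 \in \partial(2B)$ prevents applying Lemma~\ref{lemma:nbhd-SOn} directly with $B$ replaced by $2B$, since then $p_0 \in 2B$. Introducing the intermediate ball $B'$ (with the same center as $B$ and radius strictly between $R$ and $2R$) sidesteps this difficulty while keeping $B \subset B'$, which is exactly what transfers the measure bound to $B$. The remaining bookkeeping -- closedness and compactness of $K$, full projection, and the quantitative choice of $\eta$ -- is routine once the geometric setup is fixed.
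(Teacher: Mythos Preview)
Your argument is correct and follows the same two-case strategy as the paper: when $p_0$ is close to the ball, make the indexing condition vacuous by continuity; otherwise invoke Lemma~\ref{lemma:nbhd-SOn} and read off both the full projection and the measure bound from the resulting $\widetilde D$.

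Two remarks. First, the ``main obstacle'' you identify is not actually present. The paper splits the cases at $B$ rather than at $2B$: if $p_0\in B$ one uses the continuity argument (and $2B$ gives enough room for the perturbation), while if $p_0\notin B$ one may apply Lemma~\ref{lemma:nbhd-SOn} directly with the ball $B$ itself, since the hypothesis of that lemma is precisely $x_0+r_0T_0(e_1)\notin B$ and the conclusion already controls $|B\cap\widetilde D|$. Your intermediate ball $B'$ is harmless but unnecessary. Second, in your definition of $K$ the notation ``$(x,r,T)\in\overline U\times\overline{G_0}$'' does not type-check, since $\overline U\subset\R^n\times[0,\infty)$ and $\overline{G_0}\subset\R^n\times[0,\infty)\times SO(n)$; you presumably mean $(x,r,T)\in\overline{G_0}$ with $(x,r)\in\overline U$. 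With that fix, your $K$ is compact (using that $\widetilde D$ is closed, which follows from its construction as a finite union of closed double cones crossed with $\R^{n-2}$) and contained in $G$, and the full-projection claim goes through because any $T$ with $T(e_1)=\nu$ determines the same hyperplane $x+r\nu+\nu^\perp$, so you are free to choose $T$ close to $T_0$.
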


\begin{proof}
Without loss of generality, we may assume $G = G_1 \times G_2$, where $G_1$ and $G_2$ are open sets in $\R^n \times [0, \infty)$ and $SO(n)$, respectively.

If $x_0 + r_0T_0(e_1) \in B$, then we can choose $K \subset G$ to contain a neighborhood of $(x_0,r_0,T_0)$ and such that $x+rT(e_1) \in 2B$ for all $(x, r, T) \in K$. Then the set \eqref{eq:small-intersection-B} is empty, so the lemma holds trivially.

Now suppose $x_0 + r_0T_0(e_1) \not\in B$. We can apply the previous lemma with $\eta$ sufficiently small (depending on $G_2$) to get a set $\widetilde U$. We take $U$ to be an open neighborhood of $(x_0, r_0)$ inside $\widetilde U$ and compactly contained in $G_1$. Then for each $\eps > 0$, the previous lemma gives a set $\widetilde D$. We take $K$ to be the closure of
\[
\{(x, r, T) \in U \times SO(n) :  x + rT(e_1) + T(H) \subset \widetilde D
\},
\]
and by the properties of $\widetilde D$ given by the previous lemma, this $K$ has the desired properties.
\end{proof}

 For $B \subset \R^n$ a closed ball, let $\cA(B)$ be the set of all $K \in \cK''$ such that 
\[
B \cap \bigcup_{\substack{(x, r, T) \in K\\ x + rT(e_1) \not\in 2B}}(x+rT(e_1) + T(H))
\]
has measure zero.

\begin{lemma}
\label{lemma:2-diam-Q-dense}
$\cA(B)$ is residual in $\cK''$.
\end{lemma}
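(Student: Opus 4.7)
The plan is to exhibit $\cA(B)$ as a dense $G_\de$ subset of the complete metric space $\cK''$ and then invoke the Baire category theorem. The key technical issue is that the exclusion ``$x+rT(e_1)\notin 2B$'' appearing in Lemma~\ref{lemma:nbhd-of-x-r} is an \emph{open} condition on the parameter space, so it can fail in the Hausdorff limit (a limit can land on $\partial(2B)$); hence the naive sublevel sets $\{K : |E_K\cap B|<1/m\}$ need not be open. To fix this I would replace the open exclusion by a closed one: for each $n\in\N$ set
\[
E^{1/n}_K := \bigcup_{\substack{(x,r,T)\in K \\ \dist(x+rT(e_1),\,2B)\,\ge\,1/n}}\bigl(x+rT(e_1)+T(H)\bigr),
\]
so that the union defining $E_K$ (with the open exclusion) equals $\bigcup_n E^{1/n}_K$, giving
\[
\cA(B)=\bigcap_{n,m\in\N}\cA_{n,m},\qquad \cA_{n,m}:=\{K\in\cK'':|E^{1/n}_K\cap B|<1/m\}.
\]
It therefore suffices to show that every $\cA_{n,m}$ is open and dense in $\cK''$.

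For openness of $\cA_{n,m}$, fix $K\in\cA_{n,m}$ and choose, by outer regularity of Lebesgue measure, an open set $V\supset E^{1/n}_K\cap B$ with $|V|<1/m$. I would argue by contradiction: if no Hausdorff neighborhood of $K$ lies in $\cA_{n,m}$, pick $K_j\to K$ together with $(x_j,r_j,T_j)\in K_j$ satisfying $\dist(x_j+r_jT_j(e_1),2B)\ge 1/n$ and points $p_j\in(x_j+r_jT_j(e_1)+T_j(H))\cap(B\sm V)$. Compactness of $B$ and of $\J\times SO(n)$ lets us pass to subsequences so that $p_j\to p\in B\sm V$ (since $V$ is open) and $(x_j,r_j,T_j)\to(x,r,T)\in K$ (by Hausdorff convergence). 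The now-closed exclusion passes to the limit, giving $\dist(x+rT(e_1),2B)\ge 1/n$, and continuity yields $p\in x+rT(e_1)+T(H)$. Hence $p\in E^{1/n}_K\cap B\subset V$, contradicting $p\notin V$.

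For density of $\cA_{n,m}$, fix $L\in\cK''$ and $\de>0$. For each $(x_0,r_0)\in\J$, use the full projection of $L$ onto $\J$ to choose $T_0$ with $(x_0,r_0,T_0)\in L$, and apply Lemma~\ref{lemma:nbhd-of-x-r} with $G$ equal to the relatively open $\de/2$-ball around $(x_0,r_0,T_0)$ to obtain an open neighborhood $U_{(x_0,r_0)}$ of $(x_0,r_0)$ in $\R^n\times[0,\infty)$. Extract a finite subcover $U_1,\ldots,U_N$ of the compact set $\J$, and apply Lemma~\ref{lemma:nbhd-of-x-r} with $\eps=1/(2mN)$ to obtain compact $K_i\subset G_i$ with full projection onto $U_i$ and the stated measure bound. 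Replacing each $K_i$ by $K_i\cap(\J\times SO(n))$ preserves full projection onto $U_i\cap\J$ (any witness $T$ for $(y,s)\in U_i\cap\J$ automatically sits in $\J\times SO(n)$), and augmenting by a finite $\de/2$-net $F\subset L$ gives
\[
K:=\bigcup_{i=1}^N\bigl(K_i\cap(\J\times SO(n))\bigr)\cup F\ \in\cK'',
\]
with $d_H(K,L)\le\de/2$. The finitely many hyperplanes contributed by $F$ are Lebesgue-null, so $|E_K\cap B|<\sum_i 1/(2mN)=1/(2m)<1/m$, and hence $|E^{1/n}_K\cap B|<1/m$, i.e.\ $K\in\cA_{n,m}$.

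The main obstacle is the openness step: the exclusion supplied by Lemma~\ref{lemma:nbhd-of-x-r} is open and is therefore incompatible with Hausdorff limits of elements of $\cK''$. Trading it for the closed exclusion $\dist(\cdot,2B)\ge 1/n$, at the cost of an extra countable intersection over $n$, is exactly what converts the quantitative output of Lemma~\ref{lemma:nbhd-of-x-r} into a genuine $G_\de$-statement; Baire category then produces the desired residual set $\cA(B)$.
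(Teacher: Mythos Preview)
Your proof is correct. The overall architecture---exhibit $\cA(B)$ as a countable intersection of open dense sets, with density supplied by Lemma~\ref{lemma:nbhd-of-x-r} via a finite subcover of $\J$ together with a finite $\de$-net of $L$---is exactly the paper's approach. The density argument is essentially identical to the paper's.

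The one genuine difference is in how you secure openness. You correctly identify that the open exclusion $x+rT(e_1)\notin 2B$ is not stable under Hausdorff limits, and you remedy this by replacing it with the closed condition $\dist(x+rT(e_1),2B)\ge 1/n$, at the price of an extra countable intersection over $n$; openness of each $\cA_{n,m}$ then follows from a short sequential compactness argument. The paper instead builds the stability into the definition of the approximating sets: it sets
\[
\cA(B,\eps)=\Bigl\{K\in\cK'':\exists\,\eta>0\text{ with }\Bigl|B\cap\!\!\bigcup_{\substack{(x,r,T)\in K^\eta\\ x+rT(e_1)\notin 2B}}\!\!(x+rT(e_1)+T(H))\Bigr|<\eps\Bigr\},
\]
thickening $K$ to its open $\eta$-neighborhood $K^\eta$. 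Openness is then immediate (any $K'$ with $d_H(K',K)<\eta/2$ has $(K')^{\eta/2}\subset K^\eta$), and one only needs $\bigcap_m\cA(B,1/m)\subset\cA(B)$, which is clear since $E_K\subset E_{K^\eta}$. Your fix and the paper's are dual to each other---you shrink the forbidden region, the paper enlarges $K$---and both convert Lemma~\ref{lemma:nbhd-of-x-r} into a genuine $G_\de$ statement. The paper's version avoids the compactness argument; yours makes the role of the problematic boundary $\partial(2B)$ more explicit.
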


\begin{proof}
For $\eps > 0$, let $\cA(B, \eps)$ be the set of those $K \in \cK''$ for which there is an $\eta > 0$ such that
\[
B \cap \bigcup_{\substack{(x, r, T) \in K^\eta\\ x + rT(e_1) \not\in 2B}}(x+rT(e_1) + T(H))
\]
has measure less than $\eps$, where $K^\eta$ denotes the open $\eta$-neighborhood of $K$. Since $\cA(B) = \bigcap_{m=1}^\infty \cA(B, \frac{1}{m})$, it is enough to show that $\cA(B, \eps)$ is open and dense in $\cK''$ for each $\eps > 0$. 

Fix $\eps > 0$. $\cA(B, \eps)$  is clearly open in $\cK''$. To show that it is dense, let $L \in \cK''$ be arbitrary. Our aim is to find a $K \in \cA(B, \eps)$ arbitrarily close to $L$. For each $(x, r, T) \in L$, we take a neighborhood $G_{(x, r, T)}$ of $(x,r,T)$, which we choose sufficiently small (to be specified later). Then we apply Lemma~\ref{lemma:nbhd-of-x-r} to $(x, r, T)$ to get a neighborhood $U_{(x, r, T)} \subset \R^n \times [0, \infty)$ of $(x, r)$. By compactness, there is a finite collection $\{(x_i, r_i, T_i)\} \subset L$ such that $\{U_{(x_i, r_i, T_i)}\}$ covers $\J$.

Choose $\eps_i$ so that $\sum_i \eps_i < \eps$. We apply Lemma~\ref{lemma:nbhd-of-x-r} to each $U_{(x_i, r_i, T_i)}$ with $\eps_i$ in place of $\eps$ to get a compact $K_i \subset G_{(x_i, r_i, T_i)}$ with full projection onto $U_{(x_i, r_i, T_i)}$. Let $\widetilde K_i = K_i \cap (\J \times SO(n))$. Let $K$ be the union of $\bigcup_i \widetilde K_i$ together with a finite $\delta$-net of $L$. Then $K \in \cA(B, \eps)$. By choosing $\delta$ and all the $G_{(x, r, T)}$ sufficiently small, we can make $K$ and $L$ arbitrarily close to each other in the Hausdorff metric.
\end{proof}

Now we are ready to prove Theorem~\ref{theorem:typical-davies-nikodym}. It follows easily from Lemma~\ref{lemma:2-diam-Q-dense} that, for a typical $K \in \cK''$,
\begin{equation}
\label{eq:union-punctured}
\bigcup_{(x, r, T) \in K}(x+rT(e_1) + T(H\setminus\{0\}))
\end{equation}
has measure zero. Indeed, let $\{B_i\}$ be a countable collection of balls such that every point in $\R^n$ is covered by a ball of arbitrarily small diameter, and suppose that $K\in\bigcap_i\mathcal A(B_i)$. 
For every $(x,r,T)\in K$ and for every $y\in H\setminus\{0\}$ there is a $B_i$ which contains $x + rT(e_1) + T(y)$ and has diameter less than $|y|/2$. Then $x + rT(e_1) \not\in 2B_i$, so $x + rT(e_1) + T(y)$ belongs to the null set $$B_i \cap \bigcup_{\substack{(x, r, T) \in K\\ x + rT(e_1) \not\in 2B_i}}(x+rT(e_1) + T(H)).$$

To complete the proof of Theorem~\ref{theorem:typical-davies-nikodym}, we need to show that we can remove the puncture from $H \setminus \{0\}$ when the distance $r$ is nonzero. By adapting the argument in the proof of Lemma~\ref{l:rotatedeverysize}, we can show that for any $r_0 > 0$, for a typical $K \in \cK''$, the set
\begin{equation}
\label{eq:union-gtr-r0}
\bigcup_{\substack{(x, r, T) \in K\\ r\geq r_0}}x+rT(e_1)
\end{equation} has dimension at most $1$, hence measure zero. By taking a countable intersection of $r_0$ tending to $0$, we see that for a typical $K \in \cK''$, the set
$$\bigcup_{\substack{(x, r, T) \in K\\ r\neq 0}}x+rT(e_1)$$ has measure zero. This completes the proof of Theorem~\ref{theorem:typical-davies-nikodym}.

\begin{remark}
The argument in the proof of Lemma~\ref{l:rotatedeverysize} cannot be applied directly to show that the set \eqref{eq:union-gtr-r0} has dimension at most $1$ for a typical $K \in \cK''$. There is a slight complication due to the fact that for $r_0 > 0$, the function $\cK'' \to \cK^n$ defined by $K \mapsto \bigcup_{(x, r, T) \in K, r \geq r_0} x + rT(e_1)$ is not necessarily continuous. However, the technical modifications required are straightforward, so we leave this to the reader. 
\end{remark}

\end{document}